\newcommand{\E}{\mathbb{E}}
\newcommand{\N}{\ensuremath{\mathbb{N}}}
\newcommand{\Z}{\ensuremath{\mathbb{Z}}}    
\renewcommand{\P}{\ensuremath{\mathbb{P}}}   
\newcommand{\egaldef}{:=} 
\newcommand{\telque}{\, \mbox{ s.t. } \,} 
\newcommand{\paren}[1]{\left( \left. #1 \right. \right)} 
\newcommand{\croch}[1]{\left[ \left. #1 \right. \right]} 
\newcommand{\set}[1]{\left\{ \left. #1 \right. \right\}}
\newcommand{\absj}[1]{\left\lvert #1 \right\rvert} 
\renewcommand{\P}{\mathbb{P}}
\newcommand{\ulx}{\ensuremath{\underline{x}}}
\newcommand{\Qr}[1]{\Pr\left(#1\right)}
\newcommand{\Fcal}{\ensuremath{\mathcal{F}}}
\newcommand{\bN}{\ensuremath{\overline{\N}}}
\newtheorem{remark}{Remark}[section]
\newtheorem{lemma}{Lemma}[section]
\newtheorem{obs}{Observation}[section]
\newtheorem{fact}{Fact}[section]
\newtheorem{defi}{Definition}[section]
\newtheorem{theo}{Theorem}[section]
\newtheorem{prop}{Proposition}[section]
\newtheorem{coro}{Corollary}[section]
\newtheorem{notation}{Notation}[section]
\begin{document}
\DeclareGraphicsExtensions{.pdf,.gif,.jpg}

\keywords{Chains of infinite order, coupling from the past algorithms, canonical Markov approximation, $\bar{d}$-distance}
\subjclass[2000]{Primary 60G10; Secondary 60K99}

\title{Markov approximation of chains of infinite order in the $\bar{d}$-metric}




\author{S. Gallo}
\address{Instituto de Matem\'atica\\Estat\'{\i}stica e Computa\c c\~ao
Cient\'\i fica\\ 
Universidade Estadual de Campinas\\
Rua Sergio Buarque de Holanda, 651\\
13083-859 Campinas, Brasil}
\email{gsandro@ime.unicamp.br}


\author{M. Lerasle}
\address{Laboratoire J.A.Dieudonn\'e  UMR CNRS 6621\\
Universit\'e de Nice Sophia-Antipolis, Parc Valrose\\
06108 Nice Cedex 2\\
France }
\email{ mlerasle@unice.fr}

\author{D. Y. Takahashi}
\address{Institute of Neuroscience and Psychology Department, Princeton University\\
 Green Hall, NJ, 08540}
\email{takahashiyd@gmail.com}
\thanks{SG is supported by FAPESP grant 2009/09809-1. ML was supported by FAPESP grant 2009/09494-0. DYT was supported by FAPESP grant 2008/08171-0 and Pew Latin American Fellowship. This work is part of USP project ``Mathematics, computation, language and the brain.''}

\begin{abstract}
We derive explicit upper bounds for the $\bar{d}$-distance between a chain of infinite order and its canonical $k$-steps Markov approximation. Our proof is entirely constructive and involves a ``coupling from the past'' argument. The new method covers non necessarily continuous probability kernels, and chains with null transition probabilities.  These results imply in particular the Bernoulli property for these processes.

\end{abstract}

\maketitle

\section{{\bf INTRODUCTION}}

Chains of infinite order are random processes that are specified by probability kernels (conditional probabilities), which may depend on the whole past. They provide a flexible model that is very useful in different areas of applied probability and statistics, from  bioinformatics \cite{bejerano2001a,busch2009} to linguistics \cite{galves2009,GGGL}. They are also models of considerable theoretical interest in ergodic theory \cite{walters/2007, coelho/quas/1998, quas/1996, hulse/1991} and in the general theory of stochastic process \cite{fernandez/maillard/2005, comets/fernandez/ferrari/2002, bramson/kalikow/1993}. 
A natural approach to study chains of infinite order is to approximate the original process by Markov chains of growing orders. 
In this article, we derive new upper-bounds on the $\bar{d}$-distance between a chain and its canonical $k$-steps Markov approximation.

\vspace{0.2cm}

Introduced by \cite{ornstein/1974} to study the isomorphism problem for Bernoulli shifts, the $\bar{d}$-metric is of fundamental importance in ergodic theory where chains of infinite order are also known as $g$-measures. The $\bar{d}$-distance between two processes can be informally described as the minimal proportion of times we have to change a typical realization of one process in order to obtain a typical realization of the other. \cite{ornstein/1974} showed that the set of processes which are measure theoretic isomorphic to Bernoulli shifts is $\bar{d}$-closed. Ergodic Markov chains are examples of processes that are isomorphic to Bernoulli shifts. Therefore,  if a process can be approximated arbitrary well under the $\bar{d}$-metric by a sequence of ergodic Markov chains, then this process has the Bernoulli property. In this article we prove the existence of Markov approximation schemes for classes of chains of infinite order with non-necessary continuous and with possibly null transition probabilities. Some of these processes were not considered before.
For example, \cite{coelho/quas/1998}, \cite{fernandez/galves/2002}, and \cite{johansson/oberg/pollicott/2010} required the continuity of the probability kernels. Our results show that these new examples are isomorphic to Bernoulli shifts and provide explicit upper bounds for the Markov approximation in several important cases, giving therefore information on \emph{how good} these approximations are.  

\vspace{0.2cm}

Besides ergodic theory, the $\bar d$-distance is useful in statistics and information theory. \cite{rissanen/1983} proposed to model data as realizations of stochastic chains, and proved that these data can be optimally compressed using the (unknown) probability kernel of the chain. The statistical problem is then to recover this probability kernel from the observation of typical data. Since the number of parameters to estimate is infinite, this task is impossible in general. A possible strategy to overcome this problem is the following.
(1) Couple the original chain with a Markov approximation
and (2) work with the approximating Markov chain. The $\bar{d}$-distance between the chain and its Markov approximation controls the error made in step (1). The idea is that, if this control is good enough, the good properties of the approximating Markov chain proved in step (2) can be used to study the original chain.  For instance, \cite{duarte/galves/garcia/2006} and \cite{csiszar/talata/2010} derived consistency results for chains of infinite order from the consistency of BIC estimators for Markov chains proved in \cite{csiszar/talata/2006}. This ``two steps'' procedure was also used in \cite{collet/duarte/galves/2005} to obtain a bootstrap central limit theorem for chains of infinite order from the renewal property of the approximating Markov chains.

\vspace{0.2cm}

Our main results derive from coupling arguments. We first introduce a flexible class of \emph{Coupling from the past}  algorithms (CFTP algorithms, see Section \ref{sec:cftp}). CFTP algorithms constitute an important class of perfect simulation algorithms popularized by \cite{propp/wilson/1996}. Our main assumption on the chain is that the original chain of infinite order can be perfectly simulated \emph{via} such CFTP algorithms. We state a technical result, Lemma \ref{theo:1}, which provides an abstract upper bound for the $\bar{d}$-distance with the canonical Markov approximation. This bound is then made explicit under various extra assumptions on the process used in the study of the CFTP algorithms of \citep{comets/fernandez/ferrari/2002,gallo/2009,desantis/piccioni/2010,gallo/garcia/2011}. 

%
%
%
\vspace{0.2cm}

To our knowledge, \cite{fernandez/galves/2002} provide the best explicit bounds in the literature for the $\bar{d}$-distance between a chain of infinite order and its canonical Markov approximation, depending only on the continuity rate of the probability kernels. Their result applies to weakly non-null chains having summable continuity rates. Our method recovers the same bounds, substituting  weak non-nullness by a weaker assumption, see Theorem \ref{coro:expli2}. Assuming weak non-nullness, we also obtain explicit upper bounds in some non-summable continuity regimes and other not even necessarily continuous, but satisfying certain types of localized continuity, as introduced in \cite{desantis/piccioni/2010}, \cite{gallo/2009} and \cite{gallo/garcia/2011}. This is the content of Theorems \ref{coro:italia} and \ref{lemma:1} which provide, as far as we know, the first results for non-continuous chains. Our results should also be compared with the results in \cite{johansson/oberg/pollicott/2010}, where they prove the Bernoulli property for square summable continuity regime assuming strong non-nullness, although they don't provide an explicit upper bound for the approximations. 

\vspace{0.2cm}

The paper is organized as follows. In Section \ref{sec:notations}, we introduce the notation and basic definitions used all along the paper. In Section \ref{sec:conccoup}, we construct the coupling between the original chain and its canonical Markov approximation and we introduce the class of CFTP algorithms perfectly simulating the chains. Our main results are stated in Section \ref{sec:theo}.
We postpone the proofs to Section \ref{sec:proofs}. For convenience of the reader, we leave in Appendix some extensions and technical results on the ``house of cards'' processes that are useful in our applications and are of independent interest.

\section{{\bf NOTATION, DEFINITIONS AND BACKGROUND}}\label{sec:notations}

\subsection{Notation}We use the conventions that $\N^{*}=\N\setminus\set{0}$, $\overline{\N}=\N^{*}\cup\set{\infty}$. Let $A$ be the set $\set{1,2,\ldots,N}$ for some $N\in\overline{\N}$.  Given two integers $m\leq n$, let $a_m^n$ be the string $a_m \ldots a_n$ of symbols in $A$. For
any $m\leq n$, the length of the string $a_m^n$ is denoted by
$|a_m^n|$ and defined by $n-m+1$. Let $\emptyset$ denote the empty string, of length $|\emptyset|=0$. For any $n\in\mathbb{Z}$, we will
use the convention that $a_{n+1}^{n}=\emptyset$, and naturally
$|a_{n+1}^{n}|=0$. Given two strings $v$ and $v'$, we denote by $vv'$
the string of length $|v| + |v'| $ obtained by concatenating the two
strings. If $v'=\emptyset$, then $v\emptyset=\emptyset v=v$. The concatenation of strings is also extended to the case
where $v=\ldots a_{-2}a_{-1}$ is a semi-infinite sequence of
symbols. If $n\in\N^{*}$ and $v$ is a finite string of
symbols in $A$,  $v^{n}=v\ldots v$ is the concatenation of
$n$ times the string $v$. In the case where $n=0$, $v^{0}$ is the empty string $\emptyset$. Let
$$
A^{-\mathbb{N}}=A^{\{\ldots,-2,-1\}}\,\,\,\,\,\,\textrm{ and }\,\,\,\,\,\,\,  A^{\star} \,=\, \bigcup_{j=0}^{+\infty}\,A^{\{-j,\dots, -1\}}\, ,
$$
be, respectively, the set of all infinite strings of past symbols and the set of all finite strings of past symbols. 
The case $j=0$ corresponds to the empty string $\emptyset$. Finally, we denote by    $\underline{a}=\ldots a_{-2}a_{-1}$ the elements of $A^{-\mathbb{N}}$.

\subsection{Kernels, chains and coupling}

\begin{defi}A family of transition probabilities, or \emph{kernel}, on an alphabet $A$ is a function 
\begin{equation*}
\begin{array}{cccc}
P:&A\times A^{-\mathbb{N}}&\rightarrow& [0,1]\\
&(a,\underline{x})&\mapsto&P(a|\underline{x})
\end{array}
\end{equation*}
such that
\[
\sum_{a\in A}P(a|\underline{x})=1\,\,,\,\,\,\,\,\,\forall \underline{x}\in A^{-\mathbb{N}}.
\] 
\end{defi}
\noindent $P$ is called a Markov kernel if there exists $k$ such that $P(a|\ulx)=P(a|\underline{y})$ when $x_{-k}^{-1}=y_{-k}^{-1}$. In the present paper we are mostly interested in \emph{non}-Markov kernels, in which $P(a|\underline{x})$ may depend on the whole past $\underline{x}$.
\begin{defi}
A stationary stochastic chain ${\bf X}=\{X_{n}\}_{n\in\Z}$ with distribution $\mu$ on $A^{\mathbb{Z}}$ is said to be \emph{compatible} with a family of transition probabilities $P$ if the later is a regular version of the conditional probabilities of the former, that is
\begin{equation}\label{compa}
\mu(X_{0}=a|X_{-\infty}^{-1}=\underline{x})=P(a|\underline{x})
\end{equation}
for every $a\in A$ and $\mu$-a.e. $ \underline{x}$ in $A^{-\mathbb{N}}$. 
\end{defi}
\noindent If $P$ is non-Markov, it may be hard to prove the existence of a stationary chain ${\bf X}$ compatible with it. In order to solve this issue, we assume the existence of \emph{coupling from the past algorithms} for the chain (see Section \ref{sec:cftp}). This ``constructive argument'' garantees the existence and uniqueness of ${\bf X}$. 

\begin{defi}[Canonical $k$-steps Markov approximation]Assume that ${\bf X}$ is a stationary chain with distribution $\mu$. The \emph{canonical $k$-steps Markov approximation} of ${\bf X}$ is the stationary $k$-step Markov chain ${\bf X}^{[k]}$ compatible with the kernel $P^{[k]}_{\mu}$ defined as
\[
P^{[k]}_{\mu}(a|x_{-k}^{-1})=\mu(X_{0}=a|X_{-k}^{-1}=x_{-k}^{-1}).
\]
\end{defi}
\noindent Since $\mu$ is uniquely determined by $P$, we will not mention any more the subscript $\mu$ in $P^{[k]}_{\mu}$, it will be understood that $P^{[k]}=P^{[k]}_{\mu}$.

\vspace{0.2cm}

Let us recall that a coupling between two chains ${\bf X}$ and ${\bf Y}$ taking  values in the same alphabet $A$ is a stochastic chain ${\bf Z}=\{Z_{n}\}_{n\in\mathbb{Z}}=\{(\bar{X}_{n},\bar{Y}_{n})\}_{n\in\mathbb{Z}}$ on $(A\times A)^{\Z}$ such that $\bar{\bf X}$ has the same distribution as ${\bf X}$ and $\bar{\bf Y}$ has the same distribution as ${\bf Y}$. For any pair of stationary chains ${\bf X}$ and ${\bf Y}$, let $\mathcal{C}({\bf X},{\bf Y})$ be the set of couplings between ${\bf X}$ and ${\bf Y}$.
\begin{defi}[$\bar{d}$-distance]The $\bar{d}$-distance between two stationary chains ${\bf X}$ and ${\bf Y}$ is defined  by
\[
\bar{d}({\bf X},{\bf Y})=\inf_{(\bar{\bf X},\bar{\bf Y})\in\mathcal{C}({\bf X},{\bf Y})}\mathbb{P}(\bar{X}_{0}\neq\bar{Y}_{0}).
\]
\end{defi}

For the class of ergodic processes, this distance has another interpretation which is more intuitive: it is the minimal proportion of sites we have to change in a \emph{typical} realization of ${\bf X}$ in order to obtain a \emph{typical} realization of ${\bf Y}$. Formally, 
\[
\bar{d}({\bf X},{\bf Y})=\inf_{(\bar{{\bf X}},\bar{\bf Y})\in\mathcal{C}({\bf X},{\bf Y})}\lim_{n\rightarrow+\infty}\frac{1}{n}\sum_{i=1}^{n}{\bf 1}\{\bar{X}_{i}\neq \bar{Y}_{i}\}.
\] 

\subsection{Coupling from the past algorithm (CFTP)}\label{sec:cftp}

Our CFTP algorithm constructs a sample of the stationary chain compatible with a given kernel $P$, using a sequence ${\bf U} = \{U_n\}_{n \in \Z}$ of i.i.d. random variables uniformly distributed in $[0,1[$. We denote by $(\Omega,\mathcal{F},\mathbb{P})$ the probability space associated to ${\bf U}$. The CFTP is completely determined by its \emph{update function} $F:A^{-\mathbb{N}}\cup A^{\star}\times[0,1[\rightarrow A$ which satisfies that, for any $\underline{a}\in A^{-\mathbb{N}}$ and for any $a\in A$, $\mathbb{P}(F(\underline{a},U_{0})=a)=P(a|\underline{a})$. Using this function, we define the \emph{set of coalescence times} $\Theta$ and the \emph{reconstruction function} $\Phi$ associated to $F$. For any pair of integers $m,n$ such that $-\infty<m\le n<+\infty$, let $F_{\{m,n\}}(\underline{a},U_{m}^{n})\in A^{n-m+1}$ be the sample obtained by \emph{applying recursively $F$ on the fixed past $\underline{a}$}, i.e, let $F_{\{m,m\}}(\underline{a},U_{m}):=F(\underline{a},U_{m})$ and 
\[
F_{\{m,n\}}(\underline{a},U_{m}^{n}):=F_{\{m,n-1\}}(\underline{a},U_{m}^{n-1})F(\underline{a}F_{\{m,n-1\}}(\underline{a},U_{m}^{n-1}),U_{n})\;.
\]
Secondly, let $F_{[m,m]}(\underline{a},U_{m}):=F(\underline{a},U_{m})$ and 
\begin{equation}\label{eq:F}
F_{[m,n]}(\underline{a},U_{m}^{n})=F\left(\underline{a}F_{\{m,n-1\}}(\underline{a},U_{m}^{n-1}),U_{n}\right)\;.
\end{equation}
$F_{[m,n]}(\underline{a},U_{m}^{n})$ is the last symbol of the sample $F_{\{m,n\}}(\underline{a},U_{m}^{n})$. The set
\begin{equation}\label{eq:theta}
\Theta[n]:=\{j\leq n:F_{[j,n]}(\underline{a},U_{j}^{n}) = F_{[j,n]}(\underline{b},U_{j}^{n}) \,\,\,\textrm{for all }\,\underline{a},\underline{b} \in A^{-\N} \}
\end{equation}
is called the \emph{set of coalescence times} for the time index $n$. 
Finally, the reconstruction function of time $n$ is defined by
\begin{equation}\label{eq:Phi}
[\Phi({\bf U})]_{n}=F_{[\theta[n],n]}(\underline{a},U_{\theta[n]}^{n})
\end{equation}
where $\theta[n]$ is any element of $\Theta[n]$. 
Given a kernel $P$, if $\Theta[0]\neq \emptyset$ a.s. and therefore $\Theta[n]\neq\emptyset$ a.s. for any $n\in\mathbb{Z}$, then  $[\Phi({\bf U})]_{n}$ is distributed according to the unique stationary measure compatible with $P$, see \cite{desantis/piccioni/2010}. 
%
%

\section{{\bf CONSTRUCTION OF THE COUPLING}}\label{sec:conccoup}

For any $\underline{a}\in A^{-\N}$, let $\mathcal{I}(\underline{a}):=\{I(a|a_{-k}^{-1})\}_{k\in\bN,\;a\in A}$ be any partition of $[0,1[$  having the following properties:

\begin{enumerate}
\item For any $k\in\bN$, the Lebesgue measure or length $|I_{k}(a|a_{-k}^{-1})|$ of $I_{k}(a|a_{-k}^{-1})$ only depends on $a$ and $a_{-k}^{-1}$,
\item for any $\underline{a}$ and $a$
\[
\sum_{k\in\bN}|I_{k}(a|a_{-k}^{-1})|=P(a|\underline{a}),
\]
\item the intervals are disposed  as represented in the upper part of Figure \ref{fig:partition}.
\end{enumerate}
\begin{figure}[htp]
\centering
\includegraphics{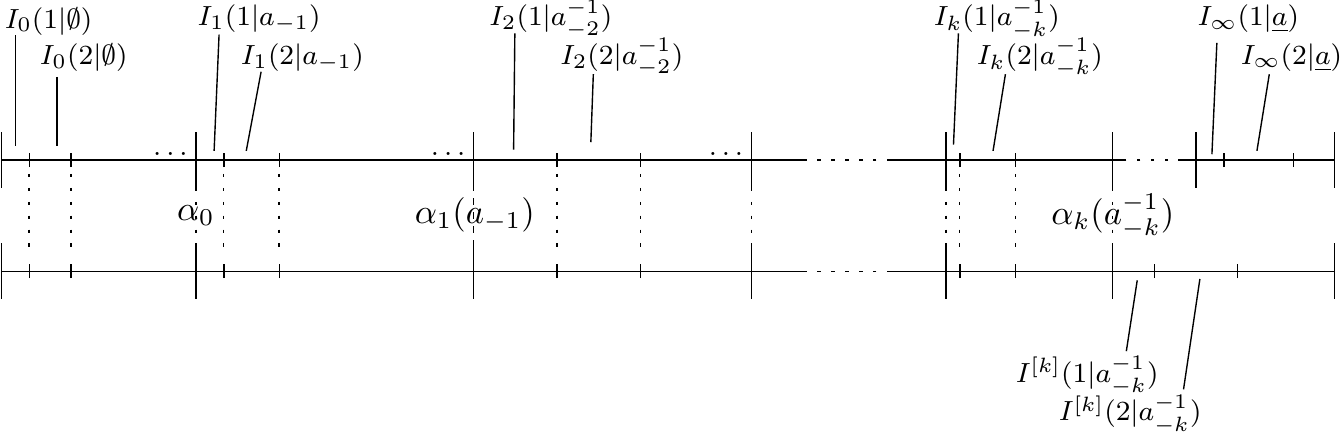}
\caption{Illustration of a range partition related to some infinite past $\underline{a}$. The upper partition is the one used for the original kernel $P$, whereas the one below is used for the approximating kernel $P^{[k]}$.}
\label{fig:partition}
\end{figure}

\begin{defi}
We call \emph{range partitions} the partitions of $[0,1[$ satisfying (1), (2) and (3) for some kernel $P$.
\end{defi}

\noindent 
The following lemma is proved in Section~\ref{sect.proof.lemma:simple}. 

\begin{lemma}\label{lemma:simple}
A set of range partitions satisfies, for any $\underline{a}$ and $a\in A$,
\[
\sum_{i=0}^{k}|I_{i}(a|a_{-i}^{-1})|\leq \inf_{\underline{z}}P(a|a_{-k}^{-1}\underline{z})\,,\,\,\,\,\forall k\geq0\enspace.
\]
\end{lemma}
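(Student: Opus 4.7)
The plan is direct and rests entirely on unpacking what a range partition is. I would fix $a\in A$, an integer $k\geq 0$, a past $\underline{a}\in A^{-\N}$, and an arbitrary semi-infinite past $\underline{z}$ extending the fixed block $a_{-k}^{-1}$. The aim is to prove
\[
P(a|a_{-k}^{-1}\underline{z}) \;\geq\; \sum_{i=0}^{k} |I_i(a|a_{-i}^{-1})|,
\]
after which the lemma follows at once by taking the infimum over $\underline{z}$.

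The first step invokes property~(2): applying it to the past $a_{-k}^{-1}\underline{z}$ decomposes the total mass placed on the symbol $a$ as
\[
P(a|a_{-k}^{-1}\underline{z}) \;=\; \sum_{i\in\bN} \bigl|I_i\bigl(a\,\big|\,(a_{-k}^{-1}\underline{z})_{-i}^{-1}\bigr)\bigr|,
\]
so I view $P(a|\cdot)$ as a sum of interval lengths indexed by ``memory levels'' $i$. The second step invokes property~(1): for every $i\leq k$, the last $i$ symbols of $a_{-k}^{-1}\underline{z}$ are the last $i$ symbols of $a_{-k}^{-1}$, namely $a_{-i}^{-1}$, so
\[
\bigl|I_i\bigl(a\,\big|\,(a_{-k}^{-1}\underline{z})_{-i}^{-1}\bigr)\bigr| = |I_i(a|a_{-i}^{-1})|,
\]
independent of $\underline{z}$. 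Retaining only these first terms and discarding all the remaining non-negative contributions (those with $i>k$ together with the possible $i=\infty$ term) yields the displayed lower bound.

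The only point that needs care is the indexing: one must verify that the suffix of length $i$ of a past whose $k$-suffix equals $a_{-k}^{-1}$ is automatically $a_{-i}^{-1}$ whenever $i\leq k$. This is immediate once stated, and it is the single mechanical fact the proof relies on; I do not expect any genuine obstacle beyond it. The figure accompanying the definition already suggests the decomposition visually, so in essence the lemma is a one-line consequence of properties (1) and (2) of a range partition.
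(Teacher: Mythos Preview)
Your proof is correct and is essentially the same argument as the paper's: both use property~(2) to write $P(a|a_{-k}^{-1}\underline{z})$ as the full sum of interval lengths, invoke property~(1) to identify the terms with $i\leq k$ as $|I_i(a|a_{-i}^{-1})|$, and then use non-negativity of the remaining terms. The only difference is cosmetic---the paper phrases the same step as a proof by contradiction, while you argue directly.
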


\noindent
Given a range partition $\mathcal{I}(\underline{a})$, the following $F$ is an update function, due to property (2).
\begin{equation}\label{eq:update}
F(\underline{a},U_{0}):=\sum_{a\in A}a.{\bf 1}\left\{U_{0}\in \bigcup_{k\in\bN}I_{k}(a|a_{-k}^{-1})\right\}.
\end{equation}

\noindent
This function $F$ explains the name ``range partition'': for a given past $\underline{a}$, when the uniform r.v. $U_{0}$ belongs to $\bigcup_{a\in A}\bigcup_{i=0}^{k}I_{i}(a|a_{-i}^{-1})$, then $F$ constructs a symbol looking at a range $\leq k$ in the past. 

\noindent
Let $L:A^{-\mathbb{N}}\cup A^{\star}\times[0,1[\rightarrow \mathbb\{0,1,2,\ldots\}$ be the \emph{range function} defined by
\begin{equation}\label{eq:length}
L(\underline{a},u):=\sum_{k\in\bN}k.{\bf1}\{u\in \cup_{a\in A}I_{k}(a|a_{-k}^{-1})\}\;.
\end{equation}
$L$ associates to a past $\underline{a}$ and a real number $u\in [0,1[$ the length of the suffix of $\underline{a}$ that $F$ needs in order to construct the next symbol when $U_{0}=u$. 

\noindent 
Using these functions, define, as in Section \ref{sec:cftp}, the related coalescence sets $\Theta[i]$, $i\in\mathbb{Z}$, and the reconstruction function $\Phi({\bf U})$, which is distributed according to the unique stationary distribution compatible with $P$ whenever $\Theta[0]$ is a.s. non-empty. \\

Let us now define the functions $F^{[k]}$ and $L^{[k]}$ that we will use for the construction of ${\bf X}^{[k]}$. 
Observe that, on the one hand, by definition of the canonical $k$-steps Markov approximation we have for any $a\in A$ and $a_{-k}^{-1}\in A^{k}$
\[
P^{[k]}(a|a_{-k}^{-1}):=\mu(X_{0}=a|X_{-k}^{-1}=a_{-k}^{-1})=\int_{A^{-\mathbb{N}}}P(a|a_{-k}^{-1}\underline{z})d\mu(\underline{z}|a_{-k}^{-1})\geq \inf_{\underline{z}}P(a|a_{-k}^{-1}\underline{z})\enspace.
\]
On the other hand, by Lemma \ref{lemma:simple}, $\inf_{\underline{z}}P(a|a_{-k}^{-1}\underline{z})\geq\sum_{j=0}^{k}|I_{k}(a|a_{-k}^{-1})|$. Thus we can define, for any $a_{-k}^{-1}$, the set of intervals $\{I^{[k]}(a|a_{-k}^{-1})\}_{a\in A}$ having length $|I^{[k]}(a|a_{-k}^{-1})|=P^{[k]}(a|a_{-k}^{-1})-\sum_{j=0}^{k}|I_{k}(a|a_{-k}^{-1})|$ and disposed as in Figure \ref{fig:partition}. The functions $F^{[k]}$ and $L^{[k]}$ are defined as follows
\begin{equation}\label{eq:up}
F^{[k]}(a_{-k}^{-1},U_{0}):=\sum_{a\in A}a{\bf 1}\{U_{0}\in \cup_{j=0}^{k}I_{j}(a|a_{-j}^{-1})\cup I^{[k]}(a|a_{-1}^{-k})\}
\end{equation}
and
\begin{equation}
L^{[k]}(a_{-k}^{-1},U_{0}):=
\sum_{j=0}^{k}j.{\bf1}\{U_{0}\in\cup_{a\in A}I_{j}(a|a_{-j}^{-1})\}+k.{\bf1}\{U_{0}\in\cup_{a\in A}I^{[k]}(a|a_{-1}^{-k})\}.
\end{equation}
Using these functions, define, as in Section \ref{sec:cftp}, the related coalescence sets $\Theta^{[k]}[i]$, $i\in\mathbb{Z}$, and the reconstruction function $\Phi^{[k]}({\bf U})$, which is distributed according to the unique stationary distribution compatible with $P^{[k]}$ whenever $\Theta^{[k]}[0]$ is a.s. non-empty. \\

Using the same sequence of uniforms ${\bf U}$ and assuming that $\Theta[0]$ and $\Theta^{[k]}[0]$ are a.s. non-empty, $(\Phi({\bf U}),\Phi^{[k]}({\bf U}))$ is a $(A\times A)$-valued chain with coordinates distributed as ${\bf X}$ and ${\bf X}^{[k]}$ respectively.  It follows that $(\Phi({\bf U}),\Phi^{[k]}({\bf U}))$ is a coupling between both chains. Hence, we have constructed a CFTP algorithm for perfect simulation of the coupled chains.

\section{{\bf STATEMENTS OF THE RESULTS}}\label{sec:theo}
%

\subsection{A key lemma}

Let us first state a technical lemma that is central in the proof of our main results.

\begin{lemma}\label{theo:1}
Assume that there exists a set of range partitions $\{\mathcal{I}(\underline{a})\}_{\underline{a}}$ such that the sets of coalescence times $\Theta[0]\cap\Theta^{[k]}[0]$ is $\mathbb{P}$-a.s. non-empty. Then, for any $\theta[0]\in \Theta[0]$,
\begin{equation}\label{eq:discrepancy}
\bar{d}({\bf X},{\bf X}^{[k]})\leq \mathbb{P}\left(\bigcup_{\underline{a}}\bigcup_{i=\theta[0]}^{0}\left\{L\left(\underline{a}F_{\{\theta[0],i-1\}}(\underline{a},U_{\theta[0]}^{i-1}),U_{i}\right)>k\right\}\right).
\end{equation}
where, for $i=\theta[0]$, the event reads $\{L\left(\underline{a},U_{\theta[0]}\right)>k\}$.
\end{lemma}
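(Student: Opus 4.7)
The plan is to exploit the coupling $(\Phi({\bf U}),\Phi^{[k]}({\bf U}))$ constructed in Section \ref{sec:conccoup}. Since the assumption $\Theta[0]\cap\Theta^{[k]}[0]\neq\emptyset$ a.s.\ ensures in particular that both $\Theta[0]$ and $\Theta^{[k]}[0]$ are a.s.\ non-empty, both reconstruction functions are well defined, and by definition of $\bar{d}$,
\[
\bar{d}({\bf X},{\bf X}^{[k]})\le \mathbb{P}\bigl([\Phi({\bf U})]_0\neq [\Phi^{[k]}({\bf U})]_0\bigr).
\]
Denoting by $E$ the complement of the event appearing on the right-hand side of \eqref{eq:discrepancy}, it then suffices to show that on $E$ we have $[\Phi({\bf U})]_0=[\Phi^{[k]}({\bf U})]_0$.

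The key observation driving the argument is that the construction of $F^{[k]}$ in \eqref{eq:up} was designed precisely so that $F$ and $F^{[k]}$ agree whenever the range function $L$ does not exceed $k$. Indeed, the intervals $I_j(a|a_{-j}^{-1})$ for $j\le k$ entering the definition of $F$ are exactly the first $k+1$ intervals in the range partition of $F^{[k]}$, to which only the extra intervals $I^{[k]}(a|a_{-1}^{-k})$ are appended. Consequently, for any past $\underline{b}\in A^{-\mathbb{N}}$ and any $u\in[0,1[$ with $L(\underline{b},u)\le k$,
\[
F(\underline{b},u)=F^{[k]}(b_{-k}^{-1},u).
\]
This identity is immediate from the definitions and is the only place where the specific form of $F^{[k]}$ intervenes.

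The core of the proof is then a time induction. Fix an arbitrary $\underline{a}\in A^{-\mathbb{N}}$ and let $a_{-k}^{-1}$ denote its last $k$ symbols. I claim that on $E$,
\[
F_{\{\theta[0],i\}}(\underline{a},U_{\theta[0]}^{i})=F^{[k]}_{\{\theta[0],i\}}(a_{-k}^{-1},U_{\theta[0]}^{i}) \qquad\text{for every}\ i\in\{\theta[0],\ldots,0\}.
\]
The base case $i=\theta[0]$ follows from the key observation applied to $\underline{b}=\underline{a}$ and $u=U_{\theta[0]}$, since on $E$ one has $L(\underline{a},U_{\theta[0]})\le k$. For the induction step, let $Y_{\theta[0]}^{i-1}$ denote the common value produced at time $i-1$; the next symbols produced by the two recursions are $F(\underline{a}Y_{\theta[0]}^{i-1},U_i)$ and $F^{[k]}(c_{-k}^{-1},U_i)$, where $c_{-k}^{-1}$ are the last $k$ symbols of $a_{-k}^{-1}Y_{\theta[0]}^{i-1}$. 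The event $E$ ensures $L(\underline{a}Y_{\theta[0]}^{i-1},U_i)\le k$, and by our choice of $a_{-k}^{-1}$ the last $k$ symbols of $\underline{a}Y_{\theta[0]}^{i-1}$ coincide with those of $a_{-k}^{-1}Y_{\theta[0]}^{i-1}$; the key observation then yields equality. This bookkeeping of the ``last $k$ symbols'' across the recursion is the step I expect to require the most care.

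Taking $i=0$ in the induction gives $F_{[\theta[0],0]}(\underline{a},U_{\theta[0]}^{0})=F^{[k]}_{[\theta[0],0]}(a_{-k}^{-1},U_{\theta[0]}^{0})$ on $E$, for every $\underline{a}$. Because $\theta[0]\in\Theta[0]$, the left-hand side equals $[\Phi({\bf U})]_0$ and does not depend on $\underline{a}$; the identity therefore forces the right-hand side to be independent of $a_{-k}^{-1}$ on $E$, which simultaneously shows that $\theta[0]\in\Theta^{[k]}[0]$ on $E$ and that this common value equals $[\Phi^{[k]}({\bf U})]_0$. Hence $\{[\Phi({\bf U})]_0\neq[\Phi^{[k]}({\bf U})]_0\}\subseteq E^c$, and taking probabilities yields \eqref{eq:discrepancy}.
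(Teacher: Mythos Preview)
Your proof is correct and follows essentially the same route as the paper's: both rely on the observation that $L(\underline{b},u)\le k$ forces $F(\underline{b},u)=F^{[k]}(b_{-k}^{-1},u)$ (the paper's \eqref{eq:mainprop}), and then propagate this recursively from $\theta[0]$ to $0$ to conclude that on $E$ one has $F_{\{\theta[0],0\}}(\underline{a},U_{\theta[0]}^{0})=F^{[k]}_{\{\theta[0],0\}}(a_{-k}^{-1},U_{\theta[0]}^{0})$, hence $\theta[0]\in\Theta^{[k]}[0]$ and $[\Phi({\bf U})]_0=[\Phi^{[k]}({\bf U})]_0$. Your write-up is in fact more explicit than the paper's on the bookkeeping of the last $k$ symbols during the recursion, which the paper compresses into the phrase ``using recursively \eqref{eq:mainprop}''.
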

%
%

Examples of range partitions satisfying the conditions of this theorem have already been built, for example in \cite{comets/fernandez/ferrari/2002}, \cite{gallo/2009}, \cite{gallo/garcia/2011} and \cite{desantis/piccioni/2010}. These works assume some regularity conditions on $P$ and some non-nullness hypothesis which are presented in Sections \ref{sec:continuity}, \ref{sec:ita}, \ref{sec:localconti}. In these sections, we derive explicit upper bounds for \eqref{eq:discrepancy} under the respective assumptions. Before that, let us give an interesting remark on Bernoullicity.

\begin{obs}[A remark on Bernoullicity]
In the conditions of each works cited above, we will exhibit $\theta[0]\in\Theta[0]$ which belongs to $\Theta^{[k]}[0]$ for any sufficiently large $k$'s, and we will prove that 
\begin{equation}\label{eq:suppose}
\mathbb{P}\left(\bigcup_{\underline{a}}\bigcup_{i=\theta[0]}^{0}\left\{L\left(\underline{a}F_{\{\theta[0],i-1\}}(\underline{a},U_{\theta[0]}^{i-1}),U_{i}\right)>k\right\}\right)\stackrel{k\rightarrow\infty}{\longrightarrow}0.
\end{equation}
It follows,  by Lemma \ref{theo:1}, that
\begin{equation*}
\lim_{k \rightarrow \infty} \bar{d}({\bf X},{\bf X}^{[k]}) = 0.
\end{equation*}
We also have, for any sufficiently large $k$'s, that ${\bf X}^{[k]}$ is an ergodic Markov chain since $\Theta^{[k]}[0]$ is non-empty. Now, by the $\bar{d}$-closure of the set of processes isomorphic to a Bernoulli shift (see for example \cite{shields/1996} Theorem IV.2.10, p.228) and the fact that ergodic Markov processes have the Bernoulli property (\cite{shields/1996} Theorem IV.2.10, p.227), we conclude that the processes considered in \cite{comets/fernandez/ferrari/2002}, \cite{gallo/2009}, \cite{gallo/garcia/2011} and \cite{desantis/piccioni/2010} have the Bernoulli property.
\end{obs}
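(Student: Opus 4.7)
The plan is to assemble three ingredients that the observation already lines up in sequence: the abstract bound of Lemma \ref{theo:1}, the hypothesis \eqref{eq:suppose}, and two classical facts of ergodic theory -- the isomorphism of ergodic Markov chains to Bernoulli shifts and the $\bar d$-closure of the Bernoulli class. No genuinely new probabilistic work is required; the argument is a composition.

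\emph{Step 1 ($\bar d$-convergence).} First I would check that the hypothesis of Lemma \ref{theo:1} is available for each $k$ large enough. By assumption, the exhibited $\theta[0]\in\Theta[0]$ also lies in $\Theta^{[k]}[0]$ for all sufficiently large $k$, so $\Theta[0]\cap \Theta^{[k]}[0]$ is $\Prob$-a.s.\ non-empty for such $k$. Applying Lemma \ref{theo:1} with this $\theta[0]$ gives
\[
\bar d({\bf X},{\bf X}^{[k]}) \;\leq\; \Prob\!\left(\bigcup_{\underline a}\bigcup_{i=\theta[0]}^{0}\bigl\{L\bigl(\underline a F_{\{\theta[0],i-1\}}(\underline a,U_{\theta[0]}^{i-1}),U_i\bigr) > k\bigr\}\right),
\]
and \eqref{eq:suppose} collapses the right-hand side to $0$ as $k\to\infty$, yielding $\bar d({\bf X},{\bf X}^{[k]})\to 0$.

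\emph{Step 2 (Ergodic Markovianity of ${\bf X}^{[k]}$).} For each $k$ such that $\Theta^{[k]}[0]\neq\emptyset$ a.s., the CFTP construction of Section \ref{sec:conccoup} realizes every coordinate of ${\bf X}^{[k]}$ as a measurable function of the i.i.d.\ sequence ${\bf U}$ via the shift; in particular ${\bf X}^{[k]}$ is a factor of a Bernoulli shift and therefore stationary and ergodic. By construction it is also the (unique) stationary $k$-step Markov chain compatible with $P^{[k]}$.

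\emph{Step 3 (Transfer of the Bernoulli property).} Ergodic stationary Markov chains are measure-theoretically isomorphic to Bernoulli shifts (\cite{shields/1996}, Theorem IV.2.10, p.~227), so every ${\bf X}^{[k]}$ with $k$ large enough belongs to the Bernoulli class. Combining Step 1 with the $\bar d$-closure of this class (\cite{shields/1996}, Theorem IV.2.10, p.~228) gives ${\bf X}\in$ Bernoulli class, which is the claim. The only step where a reader might pause is the ergodicity assertion in Step 2, and even there the CFTP representation of ${\bf X}^{[k]}$ as a factor of an i.i.d.\ input reduces it to a one-line invocation; the substantive work in each of the quoted references \cite{comets/fernandez/ferrari/2002,gallo/2009,gallo/garcia/2011,desantis/piccioni/2010} lies entirely upstream, in building a range partition whose coalescence times are a.s.\ finite and whose tail bounds verify \eqref{eq:suppose}.
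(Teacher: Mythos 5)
Your proposal is correct and follows essentially the same route as the paper's own argument: apply Lemma \ref{theo:1} together with \eqref{eq:suppose} to get $\bar d({\bf X},{\bf X}^{[k]})\to 0$, note that each ${\bf X}^{[k]}$ is an ergodic Markov chain (the paper justifies this only by the non-emptiness of $\Theta^{[k]}[0]$, while you spell out the shift-equivariant factor-of-i.i.d.\ argument via the CFTP representation), and then invoke the two cited facts from \cite{shields/1996}. No gaps.
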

%
%

\subsection{Kernels with summable continuity rate}\label{sec:continuity}
Let us first define continuity.
\begin{defi}[Continuity points and continuous kernels]\label{defi:conti}For any $k\in\N$, $a$ and $a_{-k}^{-1}$, let $\alpha_{k}(a|a_{-k}^{-1}):=\inf_{\underline{z}}P(a|a_{-k}^{-1}\underline{z})$. A past $\underline{a}$ is called a \emph{continuity point} for $P$ or $P$ is said to be continuous in $\underline{a}$ if
\[
\alpha_{k}(a_{-k}^{-1}):=\sum_{a\in A}\alpha_{k}(a|a_{-k}^{-1})\stackrel{k\rightarrow+\infty}{\longrightarrow}1.
\] 
We say that $P$ is continuous when 
\[
\alpha_{k}:=\inf_{a_{-k}^{-1}}\alpha_{k}(a_{-k}^{-1})\stackrel{k\rightarrow+\infty}{\longrightarrow}1.
\]
We say  that $P$ has summable continuity rate when $\sum_{k\geq 0}(1-\alpha_{k})<\infty$.
\end{defi}

\noindent
%
\noindent
Let us also define weak non-nullness. 
\begin{defi}\label{def:weakly}
We say that a kernel $P$ is \emph{weakly non-null} if $\alpha_{0}>0$, where $\alpha_{0}:=\sum_{a\in A}\alpha_{0}(a|\emptyset)$.
\end{defi}

\noindent
\cite{desantis/piccioni/2010} have introduced a more general assumption that we call \emph{very weak non-nullness}, see Definition \ref{def.vwnn}. We postpone this definition to Section \ref{sect:proof.thm.expli2} in order to avoid technicality at this stage.



\begin{theo}\label{coro:expli2}
Assume that $P$ has summable continuity rate and is very weakly non-null. Then, there exists a constant $C<+\infty$ such that, for any sufficiently large $k$,
\[
\bar{d}({\bf X},{\bf X}^{[k]})\leq C(1-\alpha_{k})\enspace.
\]
\end{theo}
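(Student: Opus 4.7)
The plan is to apply Lemma \ref{theo:1} with a judicious choice of range partition and then bound the resulting probability via a renewal / house-of-cards argument.

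First, I would choose the range partition $\{\mathcal{I}(\underline{a})\}_{\underline{a}}$ to be the one used in the CFTP construction of \cite{comets/fernandez/ferrari/2002}, suitably adapted as in \cite{desantis/piccioni/2010} to handle very weak non-nullness. The crucial property of this construction is that the intervals $I_j(a|a_{-j}^{-1})$ are arranged so that there is a deterministic \emph{worst-case} range function $L^{*}:[0,1[\to\N$, depending only on $u$ and not on the past, such that $L(\underline{a},u)\leq L^{*}(u)$ for every $\underline{a}$, and so that $\P(L^{*}(U)>k)=1-\alpha_{k}$. Setting $L_{i}^{*}:=L^{*}(U_{i})$ thus yields i.i.d.\ variables of known distribution that uniformly dominate the range actually used by $F$ regardless of the past plugged in.

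Second, with this choice, the event in the right-hand side of \eqref{eq:discrepancy} is contained in $\bigcup_{i=\theta[0]}^{0}\{L_{i}^{*}>k\}$, and moreover any $\theta[0]\in\Theta[0]$ at which $L_{i}^{*}\leq k$ for all $i\in[\theta[0],0]$ is automatically a coalescence time for the approximating CFTP (since $F^{[k]}$ coincides with $F$ on the subintervals corresponding to range $\leq k$, and the extra interval $I^{[k]}$ contributes no extra range beyond $k$). In particular $\Theta[0]\cap\Theta^{[k]}[0]$ is a.s.\ non-empty for $k$ large enough, so Lemma \ref{theo:1} applies. Union bound and Fubini then give
\[
\bar{d}({\bf X},{\bf X}^{[k]})\leq \E\left[\sum_{i=\theta[0]}^{0}\mathbf{1}\{L_{i}^{*}>k\}\right]=\sum_{n\geq 0}\P\bigl(-\theta[0]\geq n,\,L_{-n}^{*}>k\bigr).
\]

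The heart of the argument is to show that the right-hand side is at most $C(1-\alpha_{k})$ with $C$ independent of $k$. I would use the backward \emph{house-of-cards} representation of $\theta[0]$: the coalescence time is (the negative of) the first return to $0$ of the Markov chain $\tau_{n-1}=\max(\tau_{n}-1,L_{n-1}^{*})$ driven by the i.i.d.\ sequence $(L_{i}^{*})_{i\leq 0}$. Under a summable continuity rate and very weak non-nullness this chain is positive recurrent with finite expected return time (this is precisely the content of the ``house of cards'' results deferred to the Appendix). Combining the renewal structure with the i.i.d.\ property of $(L_{i}^{*})$ via a Wald / Palm-type identity, the double sum collapses to $\E[|\theta[0]|+1]\cdot\P(L^{*}>k)=C(1-\alpha_{k})$.

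The main obstacles will be (i) adapting the CFF range partition so that it remains a valid CFTP scheme under \emph{very} weak non-nullness rather than the standard weak non-nullness — this is the reason a refined construction from \cite{desantis/piccioni/2010} is needed and why the quantitative Definition \ref{def.vwnn} is postponed — and (ii) the renewal/Palm step, since $\theta[0]$ and the $L_i^*$ in its range are not independent, so a direct Wald identity fails and one must exploit the stationary Markovian structure of the house-of-cards chain to decouple them. Summability of $(1-\alpha_{k})$ enters exactly in guaranteeing the finite mean return time that yields the constant $C$.
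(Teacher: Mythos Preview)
Your plan matches the paper's proof almost exactly: the paper uses the range partition $\mathcal{I}^{(1)}$ (so that $L^{(1)}(\underline{a},U_i)>k$ implies $U_i>\alpha_k$, independently of the past), takes the De Santis--Piccioni coalescence time $\theta[0]=Y_Q$ built from the auxiliary times $W_i,Y_i$ to accommodate very weak non-nullness, checks that $\theta[0]\in\Theta^{(1)}[0]\cap\Theta^{(1),[k]}[0]$, applies Lemma~\ref{theo:1}, and then bounds $\E\bigl[\sum_{i=\theta[0]}^{0}\mathbf{1}\{U_i>\alpha_k\}\bigr]$.

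Your obstacle (ii), however, is illusory. A direct Wald identity \emph{does} apply: with $\mathcal{G}_n=\sigma(U_0,\ldots,U_{-n})$, the De Santis--Piccioni construction makes $|\theta[0]|$ a genuine $(\mathcal{G}_n)$-stopping time, and $\mathbf{1}\{U_{-n}>\alpha_k\}$ is independent of $\mathcal{G}_{n-1}$, so $\sum_n\P(|\theta[0]|\ge n,\,U_{-n}>\alpha_k)=\sum_n\P(|\theta[0]|\ge n)(1-\alpha_k)=\E[|\theta[0]|+1](1-\alpha_k)$. No Palm decoupling or house-of-cards stationarity is needed; the work goes entirely into showing $\E|\theta[0]|<\infty$ under summable continuity (the paper does this via Lemmas~\ref{lemma:finiteexpect} and~\ref{lem2}, using the i.i.d.\ block structure of the $Y_{i-1}-Y_i$ and optional sampling).
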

\begin{remark}
This upper bound is new since we do not assume weak non-nullness. \cite{fernandez/galves/2002} showed, under weak non-nullness, that for any sufficiently large $k$, there exists a positive constant $C$ such that
\[
\bar{d}({\bf X},{\bf X}^{[k]})\leq C\beta_{k}
\]
where 
\[
\beta_{k}:=\sup\{|P(a|a_{-k}^{-1}\underline{x})-P(a|a_{-k}^{-1}\underline{y})|\,:\,\,a\in A,\,a_{-k}^{-1}\in A^{k},\,\underline{x},\underline{y}\in A^{-\mathbb{N}}\}.
\]
This later quantity is related to $\alpha_{k}$ through the inequalities $1-\alpha_{k}\ge \frac{|A|}{2}\beta_{k}$ and $1-\alpha_{k}\leq D\beta_{k}$ for some $D>1$ and sufficiently larges $k$'s. Moreover, $1-\alpha_{k}=\beta_{k}$ for binary alphabets. Thus, Theorem \ref{coro:expli2} extends the bound in \cite{fernandez/galves/2002}. 
\end{remark}

\subsection{Using a prior knowledge of the histories that occur}\label{sec:ita}

\cite{desantis/piccioni/2010} introduced the following assumptions on kernels. Define 
\[
\forall k\geq 1,\quad J_{k}(U_{-k}^{-1}):=\{\underline{x}\in A^{-\N}\telque \forall 1\leq l\leq k,\;x_{-l}=a\quad\textrm{if}\,\,U_{-l}\in I(a|\emptyset)\,\mbox{for some}\,a\in A\},
\]
\[
A_{0}:=\alpha_{0}\quad\mbox{and}\quad \forall k\geq1, \quad A_{k}(U_{-k}^{-1}):=\inf\{\alpha_{k}(x_{-k}^{-1}):\underline{x}\in J_{k}(U_{-k}^{-1})\} \enspace.
\]
Finally, let
\begin{equation}\label{eq:ell}
\ell(U_{-\infty}^{0}):=\inf\{j\in\mathbb{N}:U_{0}<A_{j}(U_{-j}^{-1})\}.
\end{equation}

\begin{theo}\label{coro:italia}
If ${\bf X}$ has a kernel that satisfies $\E\paren{\prod_{k\ge0}A_{k}(U_{-k}^{-1})^{-1}}<\infty$, then there exists a positive constant $C<+\infty$ such that
\[
\bar{d}({\bf X},{\bf X}^{[k]})\leq C\mathbb{P}(\ell(U_{-\infty}^{0})>k).
\] 
\end{theo}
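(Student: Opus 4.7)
My plan is to apply Lemma~\ref{theo:1} with $\theta[0]=-\tau$, where $\tau=\tau({\bf U})$ denotes the backward coalescence time of the CFTP algorithm of \cite{desantis/piccioni/2010}, and then to convert the resulting probability bound into $C\,\P(\ell(U_{-\infty}^0)>k)$ by combining stationarity with a key independence property of $\tau$. Under the hypothesis $\E(\prod_{j\ge 0}A_j(U_{-j}^{-1})^{-1})<\infty$, the results of \cite{desantis/piccioni/2010} guarantee that their CFTP coalesces almost surely with $\E(\tau)<\infty$, and that $-\tau\in\Theta[0]\cap\Theta^{[k]}[0]$ for sufficiently large $k$ (the range partitions used for ${\bf X}$ and ${\bf X}^{[k]}$ share the shallow ``sure'' intervals). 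The essential additional feature is that the decision ``coalescence has occurred by time $-n$'' depends only on $U_{-n}^0$, so $\{\tau<j\}\in\sigma(U_{-j+1},\ldots,U_0)$.

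Writing $B_i:=\bigcup_{\underline{a}}\{L(\underline{a}F_{\{-\tau,i-1\}}(\underline{a},U_{-\tau}^{i-1}),U_i)>k\}$, Lemma~\ref{theo:1} and a union bound give
\[
\bar d({\bf X},{\bf X}^{[k]})\le\E\!\left[\sum_{j=0}^{\infty}\un_{\{\tau\ge j\}}\,\un_{B_{-j}}\right].
\]
I would then establish the pointwise inclusion $B_i\subset\{\ell(U_{-\infty}^i)>k\}$. By the construction of the range partitions and Lemma~\ref{lemma:simple}, the event $L(\underline{a}',U_i)>k$ is equivalent to $U_i\ge\alpha_k((\underline{a}')_{i-k}^{i-1})$. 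On coalescence, the reconstructed word $F_{\{-\tau,i-1\}}$ respects the $J$-constraints imposed by $U_{-\tau}^{i-1}$; taking the infimum over $\underline{a}$ and using the very definition of $A_k$, one obtains $\inf_{\underline{a}}\alpha_k((\underline{a}F)_{i-k}^{i-1})\ge A_k(U_{i-k}^{i-1})$, hence $B_i\subset\{U_i\ge A_k(U_{i-k}^{i-1})\}=\{\ell(U_{-\infty}^i)>k\}$, the last equality following from the fact that $A_j$ is non-decreasing in $j$.

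The payoff is the following independence argument: for each $j\ge 0$, $\{\tau<j\}\in\sigma(U_{-j+1},\ldots,U_0)$ while $\{\ell(U_{-\infty}^{-j})>k\}\in\sigma(\ldots,U_{-j-1},U_{-j})$, and these two $\sigma$-algebras are independent since the $U_i$ are i.i.d. Hence
\[
\P\!\left(\tau\ge j,\,\ell(U_{-\infty}^{-j})>k\right)=\P(\tau\ge j)\,\P(\ell(U_{-\infty}^0)>k)
\]
by stationarity of $(\ell(U_{-\infty}^i))_i$. Summing over $j\ge 0$ then yields the theorem with $C=\E(\tau)+1<\infty$. The main obstacle I expect is the rigorous verification of the inclusion $B_i\subset\{\ell(U_{-\infty}^i)>k\}$ in the regime $i+\tau<k$, where the last $k$ symbols of $\underline{a}F_{\{-\tau,i-1\}}$ still contain free positions coming from $\underline{a}$ and the comparison between $\inf_{\underline{a}}\alpha_k$ and $A_k(U_{i-k}^{i-1})$ is more delicate; this may require exploiting further structural properties of the range partitions, or extending the coalescence window slightly while preserving finiteness of its expectation.
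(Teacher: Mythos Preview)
Your plan is essentially the paper's own route: apply Lemma~\ref{theo:1} with the De Santis--Piccioni coalescence time $\theta[0]$ defined via $\ell$ (equation~\eqref{eq:theta0}), reduce the range event to $\{\ell(U_{-\infty}^i)>k\}$, and then factor the expectation using that $\{\theta[0]\ge -j+1\}\in\sigma(U_{-j+1},\ldots,U_0)$ is independent of $\{\ell(U_{-\infty}^{-j})>k\}\in\sigma(U_l:l\le -j)$. The paper packages this last step as ``Wald's equality'', but the underlying computation is exactly your independence-plus-stationarity sum, so there is no real methodological difference there.

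The genuine gap is the obstacle you flag and leave open: the pointwise inclusion $B_i\subset\{\ell(U_{-\infty}^i)>k\}$ in the regime $i-\theta[0]<k$. Your proposed fixes (extra structural properties of the partitions, or enlarging the window) are unnecessary; the point you are missing is that this regime is \emph{empty} on $B_i$, precisely because of how $\theta[0]$ is defined. Indeed, you already need (and sketch) the implication
\[
\ell(U_{-\infty}^{i})\le m \ \Longrightarrow\ \sup_{\underline{z}}L^{(1)}\!\big(\underline{z}F^{(1)}_{\{i-m,i-1\}}(\underline{z},U_{i-m}^{i-1}),U_i\big)\le m,
\]
which follows from $\{\underline{z}F^{(1)}_{\{i-m,i-1\}}(\underline{z},\cdot)\}_{\underline z}\subset J_m(U_{i-m}^{i-1})$. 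But by the very definition of $\theta[0]$ you have $\ell(U_{-\infty}^i)\le i-\theta[0]$ for every $i\in\{\theta[0],\ldots,0\}$. Applying the implication with $m=i-\theta[0]$ gives $L^{(1)}\big(\underline{a}F^{(1)}_{\{\theta[0],i-1\}}(\underline{a},\cdot),U_i\big)\le i-\theta[0]$ for all $\underline{a}$, so $B_i\neq\emptyset$ forces $i-\theta[0]>k$. In that case the last $k$ symbols of the past are all reconstructed and lie in $J_k(U_{i-k}^{i-1})$, so your comparison $\inf_{\underline{a}}\alpha_k\ge A_k(U_{i-k}^{i-1})$ goes through and $B_i\subset\{\ell(U_{-\infty}^i)>k\}$ as desired. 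This is exactly how the paper closes the argument in the proof of Lemma~\ref{lemma:ahahah}; once you add this observation, your proof is complete and matches the paper's.
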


In order to illustrate the interest of this result, let us give two simple examples. Other examples can be found in \cite{desantis/piccioni/2010} and \cite{gallo/garcia/2011}.

\vspace{0.3cm}

\paragraph{{\bf Summable continuity regime with weak non-nullness}} Theorem \ref{coro:italia} allows to recover the result of Theorem \ref{coro:expli2} in the weakly non-null case. To see this, it is enough to observe that, for any $U_{-k}^{-1}$, $A_{k}(U_{-k}^{-1})\geq \alpha_{k}$ (see Definition \ref{defi:conti} for $\alpha_{k}$). It follows that $\prod_{k\ge0}\alpha_{k}>0$ (which is equivalent to $\sum_{k\ge0}(1-\alpha_{k})<+\infty$), implies that $\prod_{k\ge0}A_{k}(U_{-k}^{-1})$ is bounded away from zero, hence, its inverse has finite expectation. Hence, Theorem \ref{coro:italia} applies and gives
\[
\bar{d}({\bf X},{\bf X}^{[k]})\leq C\mathbb{P}(\ell(U_{-\infty}^{0})>k)\leq C(1-\alpha_{k})\enspace.
\]

\vspace{0.3cm}

\paragraph{{\bf A simple discontinuous kernel on $A=\{1,2\}$}} Let $\epsilon\in(0,1/2)$ and let $\{p_{i}\}_{i\ge0}$ be any sequence such that,  $\epsilon\leq p_{i}<1-\epsilon$ for any $i\ge0$. Let $t(\underline{a}):=\inf\{i\ge0:a_{-i-1}=2\}$ let $\bar{P}$ be the following kernel:
\begin{equation}\label{eq.discontinuous.example}
\forall\underline{a}\in\{1,2\}^{-\mathbb{N}}\,,\,\,\,\bar{P}(2|\underline{a})=p_{t(\underline{a})}\enspace. 
\end{equation}
The existence of a unique stationary chain compatible with this kernel is proven in \cite{gallo/2009} for instance. This chain is the renewal sequence, that is, a concatenation of blocks of the form $1\ldots12$ having random length with finite expectation.  It is  clearly weakly non-null, however, it is not necessarily continuous. In fact, a simple calculation shows that $\alpha_{k}=1-\sup_{l,m\ge k}|p_{l}-p_{m}|$, which needs not to go to $1$. Nevertheless, if we assume furthermore that $\sup_{k\ge0}\alpha_{k}>1-2\alpha(1)\alpha(2)$, we have $\E\paren{\prod_{k\ge0}A_{k}(U_{-k}^{-1})^{-1}}<\infty$ (see example 1 in \cite{desantis/piccioni/2010}. We now want to derive an upper bound for $\mathbb{P}(\ell(U_{-\infty}^{0})>k)$. First, define
\[
N(U_{-\infty}^{-1}):=\inf\{n\ge1: U_{-n}\in I(2)\},
\]
and observe that, $A_{k}(U_{-k}^{-1})=1$ for any $k\ge N(U_{-\infty}^{-1})$. It follows that
\begin{multline*}
\mathbb{P}(\ell(U_{-\infty}^{0})>k)=\mathbb{P}(\inf\{j\in\mathbb{N}:U_{0}<A_{j}(U_{-j}^{-1})\}>k)\\
\leq \mathbb{P}(U_{0}\geq A_{k}(U_{-k}^{-1}))\leq\mathbb{P}(A_{k}(U_{-k}^{-1})<1)\leq \mathbb{P}(N(U_{-\infty}^{-1})>k)\leq (1-\epsilon)^{k}.
\end{multline*}

\begin{obs}
The preceding theorems yield explicit upper bounds. However, they hold under restrictions we would like to surpass. 

\noindent
First, in the continuous regime, we have  assumed that $\sum_{k\ge0}(1-\alpha_{k})<+\infty$. Nevertheless, CFTP are known to exist with the weaker assumption $\sum_{k\ge1}\prod_{i=0}^{k-1}\alpha_{i}=+\infty$, and it is known that  $\bar{d}({\bf X}, {\bf X}^{[k]})$ goes to zero in this case. We will be interested in upper bounds for the rate of convergence to zero under these weak conditions. 

\noindent
Second, in Theorem \ref{coro:italia}, the assumption $\E\paren{\prod_{k\ge0}A_{k}(U_{-k}^{-1})^{-1}}<\infty$ is generally difficult to check: this is particularly clear for the example of $\bar{P}$ where it requires the (not necessary) extra-assumption $\sup_{k\ge0}\alpha_{k}>1-2\alpha(1)\alpha(2)$.  
\end{obs}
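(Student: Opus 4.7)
The final item is labeled as an \emph{observation} and is in substance a programmatic remark rather than a mathematical claim: it asserts that the hypotheses of Theorems \ref{coro:expli2} and \ref{coro:italia} are stronger than necessary and announces that subsequent sections will weaken them. Accordingly, ``proving'' it amounts to exhibiting, later in the paper, two theorems that (i) yield an explicit rate for $\bar d({\bf X},{\bf X}^{[k]})$ under the weaker continuity condition $\sum_{k\ge 1}\prod_{i=0}^{k-1}\alpha_i=+\infty$, and (ii) replace the opaque integrability requirement $\E\paren{\prod_{k\ge0}A_k(U_{-k}^{-1})^{-1}}<\infty$ by a hypothesis that can actually be checked on concrete examples such as the kernel $\bar P$ of \eqref{eq.discontinuous.example}, without the parasitic condition $\sup_{k\ge0}\alpha_k>1-2\alpha(1)\alpha(2)$. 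My plan is to argue that both issues can be settled through a single tool, the ``house of cards'' auxiliary process alluded to in the introduction and developed in the appendix, fed into the abstract bound of Lemma \ref{theo:1}.

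For the first point, I would fix a candidate coalescence time $\theta[0]$ coming from a CFTP built on the range partition $\mathcal{I}(\underline a)$, decompose the event in \eqref{eq:discrepancy} as $\{|\theta[0]|>k\}\cup\{|\theta[0]|\le k,\;L>k\}$, and bound the first piece by the tail of the back-coalescence time of a house-of-cards chain with parameters $(\alpha_i)$. Under $\sum \prod \alpha_i=+\infty$ this tail is a.s.\ finite, and, using the renewal representation of the house of cards recalled in the appendix, one obtains an explicit (albeit slower than $1-\alpha_k$) decay governed by the partial products $\prod_{i=0}^{k-1}\alpha_i$. The second piece is controlled by $1-\alpha_k$ as before. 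Concatenating gives the rate of convergence that the observation advertises.

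For the second point, I would replace the global integrability hypothesis by a localized continuity framework in the spirit of \cite{gallo/2009} and \cite{gallo/garcia/2011}: one specifies a functional $\tau$ of the past (for $\bar P$, the natural choice is $\tau(\underline a)=t(\underline a)+1$) and requires the kernel to depend only on the $\tau$-suffix, or to be continuous once $\tau$ is read. The coalescence time $\theta[0]$ is then controlled by iterating $\tau$ along the sequence of uniforms, and Lemma \ref{theo:1} turns into a bound of the form $\mathbb{P}(\tau\text{-iterate}>k)+(\text{continuity tail beyond }\tau)$. Applied to $\bar P$, this yields directly $\bar d({\bf X},{\bf X}^{[k]})\le C(1-\epsilon)^k$ by the renewal computation already sketched after Theorem \ref{coro:italia}, but now without any extra condition on $\sup_k\alpha_k$.

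The main obstacle, and what determines the actual reach of the forthcoming Theorem \ref{lemma:1}, is the first item: extracting a quantitatively useful tail estimate for the house-of-cards back-coalescence time under the borderline assumption $\sum\prod\alpha_i=+\infty$. Mere a.s.\ finiteness delivers no rate, and a genuine rate requires additional structural input on $(\alpha_k)$ (monotonicity, regular variation, or a lower bound on $\prod_{i=0}^{k-1}\alpha_i$). Striking the right balance — weak enough to genuinely generalize Theorem \ref{coro:expli2}, strong enough to give an explicit bound sharper than the trivial one — is the delicate step, and it is where the appendix on house-of-cards processes will carry the technical weight.
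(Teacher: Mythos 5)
This item is an \emph{observation}, not a theorem: the paper offers no proof of it, only the forward reference to Section \ref{sec:localconti} and the appendices, and you correctly read it as a programmatic remark whose ``proof'' is the subsequent development. Your sketch of that development matches the paper's actual route: Theorem \ref{lemma:1} bounds $\bar{d}({\bf X},{\bf X}^{[k]})$ by $\mathbb{P}(\theta[0]<-k)$ for $\theta[0]\in\Theta'[0]$, the tail of $\theta[0]$ in the continuity regime is exactly the house-of-cards return probability $v_k$ of \eqref{achier} made explicit in Propositions \ref{prop:bfg} and \ref{prop:expli}, and the kernel $\bar{P}$ is handled by the range partition $\mathcal{I}^{(2)}$ with $\theta[0]=\max\{i\le 0: U_i\in I(2)\}$, giving $(1-\epsilon)^k$ with no condition on $\sup_k\alpha_k$.

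One refinement is worth flagging in your first item. You propose to split the event of \eqref{eq:discrepancy} into $\{|\theta[0]|>k\}$ and $\{|\theta[0]|\le k,\ L>k\}$ and to control the second piece ``by $1-\alpha_k$ as before.'' The earlier control of that type (in the proof of Theorem \ref{coro:expli2}) goes through Wald's identity and hence needs $\E|\theta[0]|<+\infty$, which is precisely what fails in the borderline regime $\sum_{k\ge1}\prod_{i=0}^{k-1}\alpha_i=+\infty$ (the house of cards is then only null recurrent). The paper avoids this by defining $\Theta'[0]$ in \eqref{eq:Theta'} so that the range used at step $j$ is at most $j-\theta[0]$; consequently, on $\{\theta[0]\ge -k\}$ every range is automatically $\le k$ and your second piece is empty. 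That is the device that makes Theorem \ref{lemma:1} a pure tail bound $\mathbb{P}(\theta[0]<-k)$ requiring only a.s.\ finiteness, and it is the step your decomposition would need to reproduce to avoid a genuine gap.
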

The next section will solve part of these objections.

\subsection{A simple upper bound under weak non-nullness}\label{sec:localconti}

Hereafter, we assume that $P$ is weakly non-null. Let $\Theta'[0]$ be the following subset of $\Theta[0]$:
\begin{equation}\label{eq:Theta'}
\Theta'[0]:=\{i\leq 0:\,\textrm{for any }\underline{a}\,,\,\,L(\underline{a}F_{\{i,j-1\}}(\underline{a},U_{i}^{j-1}),U_{j})\leq j-i\,,\,j=i,\ldots,0\}.
\end{equation}
We have the following theorem in which a priori nothing is assumed on the continuity.

\begin{theo}\label{lemma:1}
Assume that $P$ is weakly non-null and that we can construct a set of range partitions $\{\mathcal{I}(\underline{a})\}_{\underline{a}}$ for which $\Theta'[0]\neq\emptyset$, $\mathbb{P}$-a.s. Then, for any $\theta[0]\in \Theta'[0]$ 
\begin{equation}\label{eq:discrepancy2}
\bar{d}({\bf X},{\bf X}^{[k]})\leq \mathbb{P}(\theta[0]<-k).
\end{equation}
\end{theo}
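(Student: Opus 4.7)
The strategy is to apply Lemma \ref{theo:1} to the chosen $\theta[0]\in\Theta'[0]$, which is legitimate because $\Theta'[0]\subseteq\Theta[0]$, and then to exploit the stronger defining property of $\Theta'[0]$ to discard the union entirely on the event $\{\theta[0]\geq -k\}$.

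More precisely, I would first fix $\theta[0]\in\Theta'[0]$ and observe that Lemma \ref{theo:1} yields
\[
\bar{d}({\bf X},{\bf X}^{[k]})\leq \mathbb{P}\!\left(\bigcup_{\underline{a}}\bigcup_{i=\theta[0]}^{0}\left\{L\left(\underline{a}F_{\{\theta[0],i-1\}}(\underline{a},U_{\theta[0]}^{i-1}),U_{i}\right)>k\right\}\right).
\]
The whole point of the definition of $\Theta'[0]$ is that, on the event $\{\theta[0]\in\Theta'[0]\}$, for every $\underline{a}$ and every $i\in\{\theta[0],\ldots,0\}$, the range at step $i$ satisfies
\[
L\left(\underline{a}F_{\{\theta[0],i-1\}}(\underline{a},U_{\theta[0]}^{i-1}),U_{i}\right)\leq i-\theta[0]\leq -\theta[0].
\]

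Consequently, on the sub-event $\{\theta[0]\geq -k\}$ the uniform inequality $-\theta[0]\leq k$ rules out every event $\{L>k\}$ in the double union simultaneously, for all pasts $\underline{a}$ and all indices $i\in\{\theta[0],\ldots,0\}$. The random union appearing in the bound is therefore included in $\{\theta[0]<-k\}$, and passing to probabilities gives the announced estimate. There is no real analytic obstacle here: the argument is pure bookkeeping once Lemma \ref{theo:1} and the strengthened coalescence condition defining $\Theta'[0]$ are in place. Weak non-nullness and the actual construction of a range partition that makes $\Theta'[0]$ non-empty a.s.\ are never used explicitly in this proof; they only enter through the standing hypothesis of the theorem.
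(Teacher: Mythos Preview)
Your argument follows essentially the same route as the paper's own proof: show the union in Lemma~\ref{theo:1} is contained in $\{\theta[0]<-k\}$ by using that $L\le i-\theta[0]\le -\theta[0]$ on $\Theta'[0]$, then apply the lemma. However, there is one genuine gap in your justification for applying Lemma~\ref{theo:1}.

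The hypothesis of Lemma~\ref{theo:1} is not merely $\theta[0]\in\Theta[0]$; it requires that $\Theta[0]\cap\Theta^{[k]}[0]$ be $\mathbb{P}$-a.s.\ non-empty, since otherwise the coupling $(\Phi({\bf U}),\Phi^{[k]}({\bf U}))$ is not even defined. You only check the inclusion $\Theta'[0]\subseteq\Theta[0]$ and say nothing about $\Theta^{[k]}[0]$. The paper closes this gap via Lemma~\ref{lemma:weakK}, which proves $\Theta'[0]\subset\Theta^{[k]}[0]$ for every $k\ge0$ by a short case analysis based on property~\eqref{eq:mainprop}: if $\theta[0]\ge -k$ all ranges used are $\le k$ so $F$ and $F^{[k]}$ coincide throughout; if $\theta[0]<-k$ the same holds up to time $\theta[0]+k$, after which $F^{[k]}$ never looks back past $\theta[0]$ anyway. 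Once you insert this step, your proof is complete and matches the paper's. Your closing remark that weak non-nullness ``is never used explicitly'' is therefore slightly misleading: it is what makes the range-partition construction of $F^{[k]}$ and property~\eqref{eq:mainprop} available, and hence underlies Lemma~\ref{lemma:weakK}.
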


In order to illustrate this result, let us consider the examples of continuous kernels and of the kernel $\bar{P}$. \cite{gallo/garcia/2011} proposed a unified framework, including these examples and several other cases, which provides more examples of applications of this theorem. This is postponed to Appendix \ref{sec:extgallogarcia} in order to avoid technicality. 

\vspace{0.3cm}

\paragraph{{\bf Application to the continuity regime}}
Let us first introduce the following range partition.
\begin{defi}\label{def.I1}
Let $\{\mathcal{I}^{(1)}(\underline{a})\}_{\underline{a}}$ be the range partition such that, for any $a$ and $\underline{a}$
\begin{align*}
\forall k\geq0,\quad \absj{I_{k}^{(1)}(a|a_{-k}^{-1})}&:=\alpha_{k}(a|a_{-k}^{-1})-\alpha_{k-1}(a|a_{-(k-1)}^{-1})\enspace,\\
\absj{I_{\infty}^{(1)}(a|\underline{a})}&:=P(a|\underline{a})-\lim_{k\rightarrow \infty}\alpha_{k}(a|a_{-k}^{-1})\enspace, 
\end{align*}
with the convention $\alpha_{-1}(a|\emptyset)=0$. 
\end{defi}

\noindent 
Let $F^{(1)}$ and $L^{(1)}$ be the associated functions defined in \eqref{eq:update} and \eqref{eq:length}. Let
\begin{align*}
\theta[0]&:=\max\{i\leq 0:U_{j}\leq \alpha_{j-i}\,,\,\,j=i,\ldots,0\}.
\end{align*}
Observe that $L^{(1)}$ satisfies  $L^{(1)}(\underline{a},U_{0})\leq k$ whenever $U_{0}\leq \alpha_{k}$. Hence, $\theta[0]$ belongs to $\Theta'^{(1)}[0]$, the set defined by \eqref{eq:Theta'} using $F^{(1)}$ and $L^{(1)}$. Moreover, \cite{comets/fernandez/ferrari/2002} proved that, if $\sum_{k\ge1}\prod_{i=0}^{k-1}\alpha_{i}=+\infty$ (that is, under weak non-nullnes but not necessarily summable continuity)
\begin{equation}\label{achier}
\mathbb{P}(\theta[0]<-k)\leq
v_{k}:=\sum_{j=1}^{k}\sum_{
\begin{array}{c}
t_{1},\ldots,t_{j}\geq1\\
t_{1}+\ldots+t_{j}=k
\end{array}
}\prod^{j}_{m=1}(1-\alpha_{t_{m}-1})\prod_{l=0}^{t_{m}-2}\alpha_{l}
\end{equation}
which goes to $0$. This upper bound is not very satisfactory since it is difficult to handle in general. Nevertheless,  Propositions \ref{prop:bfg} and \ref{prop:expli}, given in Appendix \ref{sec:auxi}, shed light on the behavior of this vanishing sequence. In particular, under the summable continuity assumption $\sum_{k\ge0}(1-\alpha_{k})<+\infty$, Proposition \ref{prop:bfg} states that (\ref{achier}) essentially recovers the rates of Theorems \ref{coro:expli2} and \ref{coro:italia}. Also, if there exists a constant $r\in(0,1)$ and a summable sequence $(s_{k})_{k\ge1}$ such that, $\forall k\ge1$, $1-\alpha_{k}=\frac{r}{k}+s_{k}$, then, from Proposition \ref{prop:expli}, there exists a positive constant $C$ such that
\begin{equation}\label{eq:result_weak_continuity}
\bar{d}({\bf X},{\bf X}^{[k]})\leq C\frac{(\log k)^{3+r}}{k^{2-(1+r)^{2}}}.
\end{equation}

\vspace{0.3cm}

\paragraph{{\bf Application to the kernel  $\bar{P}$}}
As a second direct application of Theorem \ref{lemma:1}, let us consider the kernel $\bar{P}$ defined in \eqref{eq.discontinuous.example}. Let $\{\mathcal{I}^{(2)}(\underline{a})\}_{\underline{a}}$ be the set of range partitions, such that $|I^{(2)}(2|\emptyset)|=\alpha(2)$, $|I^{(2)}(1|\emptyset)|=\alpha(1)$ and $I^{(2)}_{k}(a|a_{-k}^{-1})=\emptyset$ for any $k\ge1$ except for $k=t(\underline{a})+1$ for which $|I^{(2)}_{k}(1|a_{-k}^{-1})|=1-p_{k}-\alpha(1)$ and $|I^{(2)}_{k}(2|a_{-k}^{-1})|=p_{k}-\alpha(2)$. It satisfies
\[
L^{(2)}(\underline{a},U_{0})=(t(\underline{a})+1){\bf 1}\{U_{0}>\alpha_{0}\}.
\]
Hence, $\theta[0]:=\max\{i\leq 0:U_{i}\in I(2)\}$ belongs to $\Theta'[0]$ ($\Theta'[0]$ is defined by \eqref{eq:Theta'} with the functions $F^{(2)}$ and $L^{(2)}$ obtained from the set of range partitions $\{\mathcal{I}^{(2)}(\underline{a})\}_{\underline{a}}$). Therefore,
\begin{equation}\label{eq:result_renewal}
\bar{d}({\bf X},{\bf X}^{[k]})\leq \mathbb{P}(\theta[0]<-k)\leq (1-\epsilon)^{k}
\end{equation}
independently of the value $\sup_{k\ge0}\alpha_{k}$. For this simple example, Theorem \ref{lemma:1} is then less restrictive than Theorem \ref{coro:italia}.

\section{{\bf PROOFS OF THE RESULTS}}\label{sec:proofs}

\subsection{Proof of Lemma \ref{lemma:simple}}\label{sect.proof.lemma:simple}

Assume that for some $k\geq0$ we have 
\[
\sum_{i=0}^{k}|I_{i}(a|a_{-i}^{-1})|> \inf_{\underline{z}}P(a|a_{-k}^{-1}\underline{z})\enspace.
\]
Then, consider a past $\underline{z}^{\star}$ such that $|I(a)|+\sum_{i=1}^{k}|I_{i}(a|a_{-i}^{-1})|>P(a|a_{-k}^{-1}\underline{z}^{\star})$. As, for all $l\geq k+1$, $|I_{l}(a|a_{-k}^{-1}z_{-l}^{-1})|\geq0$, we have
\[
\sum_{i=0}^{k}|I_{i}(a|a_{-i}^{-1})|+\sum_{l\geq k+1}|I_{l}(a|a_{-k}^{-1}z_{-l}^{-1})|>P(a|a_{-k}^{-1}\underline{z}^{\star})\;.
\]
This is a contradiction with the second properties of the partition. This concludes the proof. \qed

\subsection{Proofs of Lemma \ref{theo:1}}
We assume that $\Theta[0]\cap\Theta^{[k]}[0]$ is $\mathbb{P}$-a.s. non-empty, and we therefore have a coupling $(\Phi({\bf U}),\Phi^{[k]}({\bf U}))$ of both chains. 
By definitions of $F^{[k]}$ and $L^{[k]}$, we observe that
when 
\begin{equation}\label{eq:mainprop}
L(\underline{b}a_{-k}^{-1},U_{0})\leq k\Rightarrow \,\textrm{ for any }\,\underline{b}\,\textrm{ we have }\left\{
\begin{array}{c}
L(\underline{b}a_{-k}^{-1},U_{0})=L^{[k]}(a_{-k}^{-1},U_{0})\,\,\textrm{and},\\
F(\underline{b}a_{-k}^{-1},U_{0})=F^{[k]}(a_{-k}^{-1},U_{0}).
\end{array}\right.
\end{equation}

\noindent
Assume that, $\forall \underline{a}\in A^{-\N}$ and  $\forall i=\theta[0],\ldots,0$, $L(\underline{a}F_{\{\theta[0],i-1\}}(\underline{a},U_{\theta[0]}^{i-1}),U_{i})\leq k$. Then, using recursively \eqref{eq:mainprop}, 
$F_{\{\theta[0],0\}}(\underline{a},U_{\theta[0]}^{0})=F^{[k]}_{\{\theta[0],0\}}(a_{-k}^{-1},U_{\theta[0]}^{0})$.  In particular, $\theta[0]\in\Theta^{[k]}[0]$ and $[\Phi({\bf U})]_{0}=[\Phi^{[k]}({\bf U})]_{0}$. Therefore,
\begin{align*}
\bar{d}({\bf X},{\bf X}^{[k]})&\leq \mathbb{P}([\Phi({\bf U})]_{0}\neq[\Phi^{[k]}({\bf U})]_{0})\\
&\leq\mathbb{P}\left(\bigcup_{\underline{a}}\bigcup_{i=\theta[0]}^{0}\left\{L\left(\underline{a}F_{\{\theta[0],i-1\}}(\underline{a},U_{\theta[0]}^{i-1}),U_{i}\right)>k\right\}\right).
\end{align*}

\subsection{Proof of Theorem \ref{coro:expli2}}\label{sect:proof.thm.expli2} This section is divided in three parts. First, as mentioned before the statement of the theorem, we define \emph{very weak non-nullness}. Then, we prove some technical lemmas allowing to apply Lemma \ref{theo:1}. Finally, we prove the theorem.
\subsubsection{Definition of \emph{very weak non-nullness}}
Consider the set of range partitions $\{\mathcal{I}^{(1)}(\underline{a})\}_{\underline{a}}$ of Definition \ref{def.I1}. 
As observed by \cite{desantis/piccioni/2010}, in the continuous case, since $\{\alpha_{k}\}_{k\ge0}$ increases monotonically to $1$, there exists $k\ge0$ such that $\alpha_{k}>0$. Let $k^{\star}$ be the smallest of these integers
and let $F^{\star}$ be the following  update function 
\[
F^{\star}(a_{-k^{\star}}^{-1},U_{0}):=F^{(1)}(\underline{b},U_{0}\alpha_{k^{\star}})\,\,\,\,\quad\forall \underline{b}\,\,\,\,\textrm{s.t.}\,\,\,a_{-k^{\star}}^{-1}=b_{-k^{\star}}^{-1}\enspace.
\] 
$F^{\star}$ is well defined, since $U_{0}\alpha_{k^{\star}}\leq \alpha_{k^{\star}}$, hence $L^{(1)}(\underline{b},U_{0}\alpha_{k^{\star}})\leq k^{\star}$. In the case where $k^\star=0$, $F^{\star}$ is simply defined as
\[
F^{\star}(\emptyset,U_{0}):=F^{(1)}(\underline{b},U_{0}\alpha_{0})=\sum_{a\in A}{\bf1}\{U_{0}\alpha_{0}\in I_{0}(a|\emptyset)\}\,\,\,\,\quad\forall \underline{b}\,\,.
\] 
\begin{defi}[Coalescence set]\label{def:coalescenceSet}For $m\ge k^{\star}+1$, let $E_{m}$, the \emph{coalescence set} (different from the set of coalescence times), be defined as the set of all $u_{-m+1}^{0}\in A^{m}$ such that
\[
F_{\{-k^{\star}+1,0\}}^{\star}\left(a_{-k^{\star}}^{-1}F^{\star}_{\{-m+1,-k^{\star}\}}\paren{a_{-k^{\star}}^{-1},\frac{u_{-m+1}^{-k^{\star}}}{\alpha_{k^{\star}}}},\frac{u_{-k^{\star}+1}^{0}}{\alpha_{k^{\star}}}\right)\,\,\,\textrm{does not depend on}\,\,\,a_{-k^{\star}}^{-1}\;.
\]
When $k^{\star}=0$ and $m=1$, we have $E_{1}:=\cup_{a\in A}I_{0}(a|\emptyset)$.
\end{defi}
\begin{defi}\label{def.vwnn}
We say that $P$ is very weakly non-null if
\begin{equation}\label{hyp.hypnn}
\exists m\ge k^\star+1\qquad \telque \qquad\P(U_{-m+1}^{0}\in E_{m})>0\enspace.
\end{equation}
\end{defi}

Weak non-nullness corresponds to $\P(U_{0}\in E_{1})>0$, hence, it implies very weak non-nullness.

\subsubsection{Technical lemmas}
Let $\Theta^{(1)}[0]$ be the set of coalescence times defined by \eqref{eq:theta} for the function $F^{(1)}$. In a first part of the proof, we define a random time $\theta[0]$ (see \eqref{eq:ahah}) and we show that it belongs to $\Theta^{(1)}[0]$ and that it has finite expectation whenever $\sum_{k\ge k^{\star}}(1-\alpha_{k})<+\infty$. This random variable is defined in the proof of Theorem 2 in \cite{desantis/piccioni/2010}.

 Recall that, by construction of the range partition $\{\mathcal{I}^{(1)}(\underline{a})\}_{\underline{a}}$, for any $\underline{a}$, $L(\underline{a},U_{i})=k$ whenever $\alpha_{k-1}\leq U_{i}<\alpha_{k}$. This means that the sequence of ranges forms a sequence $\{L_{i}\}_{i\in\mathbb{Z}}:=\{L(\underline{a},U_{i})\}_{i\in\mathbb{Z}}$ of i.i.d. $\mathbb{N}$-valued r.v.'s. We now introduce two sequences of random times in the past, which are represented on Figure \ref{fig:dessin}, in the particular case where $k^{\star}=2$.
Let
\[
W_{1}:=\sup\{m\leq 0:U_{j}< \alpha_{j-m+k^{\star}}\,,\,\,j=m,\ldots,0\},
\]
and for any $i\ge1$
\[
Y_{i}:=\inf\{m<W_{i}:U_{n}<\alpha_{k^{\star}}\,,\,\,n=m+1,\ldots,W_{i}\}
\]
and
\[
W_{i+1}:=\sup\{m\leq  Y_{i}:U_{j}< \alpha_{j-m+k^{\star}}\,,\,\,j=m,\ldots,Y_{i}\}.
\]

\begin{figure}[htp]
\centering
\includegraphics{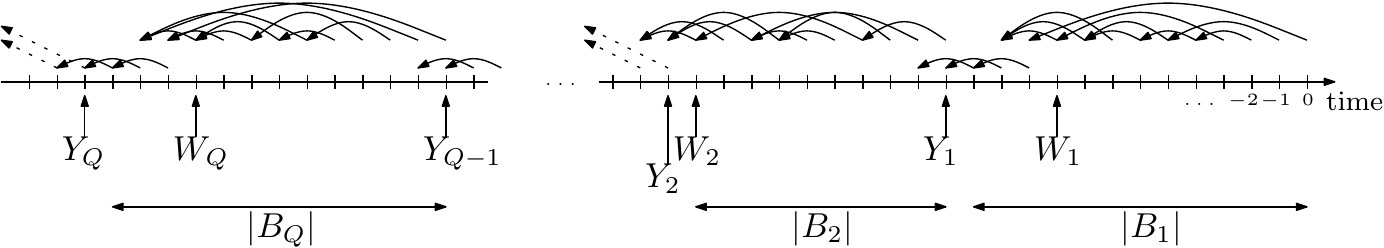}
\caption{We consider a realization of $L_{-\infty}^{0}$ in the particular case $k^{\star}=2$, that is, the arrows, which represent the length function at each time index, have length larger or equal to $2$.}
\label{fig:dessin}
\end{figure}

\noindent
Consider now the random variable 
\[
Q:=\inf\{i\ge1:(U_{Y_{i}+1},\ldots,U_{W_{i}-1})\in E_{W_{i}-Y_{i}-1}\}
\]
(see Definition \ref{def:coalescenceSet} for $E_{m}$) and put 
\begin{equation}\label{eq:ahah}
\theta[0]:=Y_{Q}.
\end{equation}

\begin{lemma}\label{lem1}
$\theta[0]\in\Theta^{(1)}[0]$.
\end{lemma}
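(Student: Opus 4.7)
The target is $Y_Q=\theta[0]\in\Theta^{(1)}[0]$, i.e.\ to show that for every $\underline{a},\underline{b}\in A^{-\mathbb{N}}$,
\[
F^{(1)}_{[Y_Q, 0]}(\underline{a}, U_{Y_Q}^0) \;=\; F^{(1)}_{[Y_Q, 0]}(\underline{b}, U_{Y_Q}^0).
\]
I would in fact establish the stronger \emph{position-wise} coalescence: regardless of the initial past, $F^{(1)}$ produces the same symbol at every time in $[W_Q-k^\star,0]$, and specializing to time $0$ yields the lemma. The natural decomposition of $[Y_Q,0]$ is into a \emph{Markov block} $(Y_Q, W_Q-1]$, on which $U_j<\alpha_{k^\star}$ for every $j$, and a \emph{tail block} $[W_Q,0]$, on which $U_j<\alpha_{j-W_Q+k^\star}$ holds by the very definition of $W_Q$.

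On the Markov block I would use two consequences of $U_j<\alpha_{k^\star}$: first, $L^{(1)}(\cdot,U_j)\leq k^\star$, so $F^{(1)}$ consults only the last $k^\star$ symbols of its past; second, $F^{(1)}(\underline{c},U_j)=F^\star(c_{-k^\star}^{-1}, U_j/\alpha_{k^\star})$ by the definition of $F^\star$. Thus, starting from either $\underline{a}$ or $\underline{b}$, the $F^{(1)}$-dynamics on this window coincide with the order-$k^\star$ Markov chain driven by $F^\star$ on the rescaled uniforms $(U_{Y_Q+1}/\alpha_{k^\star},\ldots,U_{W_Q-1}/\alpha_{k^\star})$, initialized from the $k^\star$-past built by $F^{(1)}$ through time $Y_Q$ (which is in general a mixture of symbols from $\underline{a}$ and the symbol $F^{(1)}(\underline{a},U_{Y_Q})$, so it does depend on $\underline{a}$). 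By the choice of $Q$ we have $(U_{Y_Q+1},\ldots,U_{W_Q-1})\in E_{W_Q-Y_Q-1}$; identifying the template positions $-m+1,\ldots,0$ of $E_m$ with the absolute times $Y_Q+1,\ldots,W_Q-1$ (so that $m=W_Q-Y_Q-1$), the defining property of $E_m$ is exactly the statement that the symbols produced at times $W_Q-k^\star,\ldots,W_Q-1$ are independent of the initial $k^\star$-past, hence independent of $\underline{a}$.

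The tail block is then handled by a finite forward induction on $j\in[W_Q,0]$. The bound $U_j<\alpha_{j-W_Q+k^\star}$ gives $L^{(1)}(\cdot,U_j)\leq j-W_Q+k^\star$, so the symbol $F^{(1)}$ produces at time $j$ depends only on the symbols at positions $W_Q-k^\star,\ldots,j-1$; these are covered either by the Markov-block conclusion (positions $\leq W_Q-1$) or by the inductive hypothesis (positions in $[W_Q,j-1]$), so the symbol at time $j$ is itself $\underline{a}$-independent. Taking $j=0$ closes the proof. The only genuinely non-mechanical point is the bookkeeping for the index alignment between the abstract $E_m$ template and the concrete window $(Y_Q,W_Q-1]$, together with the observation that the initial $k^\star$-past entering the $F^\star$-application is $\underline{a}$-dependent — which is precisely what the universal quantifier over $a_{-k^\star}^{-1}$ in the definition of $E_m$ absorbs; once that alignment is set up, the rest is routine.
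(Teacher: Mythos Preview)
Your proof is correct and follows essentially the same two-step argument as the paper: first use the coalescence-set membership $(U_{Y_Q+1},\ldots,U_{W_Q-1})\in E_{W_Q-Y_Q-1}$ to make the $k^\star$-block at times $W_Q-k^\star,\ldots,W_Q-1$ past-independent, then propagate forward on $[W_Q,0]$ using the range bounds. Your treatment of the index alignment between the $E_m$ template and the concrete window, and of the fact that the initial $k^\star$-past fed into $F^\star$ is $\underline a$-dependent but absorbed by the universal quantifier in the definition of $E_m$, is in fact more explicit than the paper's.

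One small imprecision worth noting: your claim that $U_j<\alpha_{j-W_Q+k^\star}$ on $[W_Q,0]$ follows ``by the very definition of $W_Q$'' is literally true only when $Q=1$. For $Q>1$, the definition of $W_Q$ only controls $j\in[W_Q,Y_{Q-1}]$; one must then use the definitions of \emph{all} the $W_i$ and $Y_i$ for $i<Q$ together with the monotonicity of $k\mapsto\alpha_k$ to cover the remaining intervals $(Y_i,W_i-1]$ and $[W_i,Y_{i-1}]$. The paper is equally terse here (it writes ``by definition of the random times $W_i$'', plural, and points to the figure), and the claim is true, but your singular ``$W_Q$'' slightly understates what is needed.
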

\begin{proof}
If $\theta[0]=-k$, then there exists some $l(=-W_{Q}+1)\leq k$ such that $U_{-i}\leq \alpha_{k^{\star}}$, $i=l,\ldots,k$, and moreover, $U_{-k}^{-l}\in E_{k-l+1}$, that is
\[
F_{\{-l-k^{\star}+1,-l\}}^{\star}\left(a_{-k^{\star}}^{-1}F^{\star}_{\{-k,-l-k^{\star}\}}(a_{-k^{\star}}^{-1},\frac{1}{\alpha_{k^{\star}}}U_{-k}^{-l-k^{\star}}),\frac{1}{\alpha_{k^{\star}}}U_{-l-k^{\star}+1}^{-l}\right)\,\,\,\,\,\textrm{is independent of }\,a_{-k^{\star}}^{-1}.
\]
Since $U_{-i}\leq \alpha_{k^{\star}}$, $i=l,\ldots,k$, it follows that 
\[
F_{\{-l-k^{\star}+1,-l\}}\left(\underline{b}F_{\{-k,-l-k^{\star}\}}(\underline{b},U_{-k}^{-l-k^{\star}}),U_{-l-k^{\star}+1}^{-l}\right)\,\,\,\,\,\textrm{is independent of }\,\underline{b}.
\]
By definition of the random times $W_{i}$, all the symbols in times $\set{W_{Q},\ldots,0}$ can then be built using those in times $\set{W_{Q}-k^{\star},\ldots,W_{Q}-1}$ since none of the arrows from time $W_{Q}$ until $0$ go further time $W_{Q}-k^{\star}$, see the Figure \ref{fig:dessin}. Therefore, the construction of the symbol at times $0$ does not depend on the symbols before $\theta[0]$, i.e $\theta[0]\in\Theta^{(1)}[0]$.
\end{proof}

\begin{lemma}\label{lemma:finiteexpect}
$\mathbb{E}|W_{1}|<+\infty$ whenever $\sum_{k\ge k^{\star}}(1-\alpha_{k})<+\infty$.
\end{lemma}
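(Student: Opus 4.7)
The plan is to recognize $|W_1|$ as the first hitting time at zero of an explicit Markov chain on $\mathbb{Z}_{\geq 0}$, and then to establish positive recurrence of that chain via a Foster--Lyapunov drift argument.

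\emph{Reformulation.} Set $T_j := \inf\{k \geq 0 : U_j < \alpha_{k+k^\star}\}$; these are i.i.d.\ $\mathbb{N}$-valued with $\P(T_j = 0) = \alpha_{k^\star} > 0$ and $\P(T_j \geq k) = 1-\alpha_{k+k^\star-1}$ for $k \geq 1$. Summing the tail yields $\E[T_0] = \sum_{k \geq k^\star}(1-\alpha_k)$, which is finite by hypothesis. The inequality $U_j < \alpha_{j-m+k^\star}$ is equivalent to $T_j \leq j - m$, so, writing $m = -n$ and reindexing $i = -j$, the condition defining $W_1 = -n$ becomes $\max_{0 \leq i \leq n}(T_{-i}+i) \leq n$. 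Setting $D_n := \max_{0 \leq i \leq n}(T_{-i}+i) - n$, one checks that $D_0 = T_0$ and that the recursion $D_n = \max(D_{n-1}-1,T_{-n})$ holds, so $(D_n)$ is a homogeneous Markov chain on $\mathbb{Z}_{\geq 0}$ and $|W_1| = \inf\{n \geq 0 : D_n = 0\}$.

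\emph{Drift and Foster's criterion.} A direct computation gives
\[
\E[D_n - D_{n-1} \mid D_{n-1}=d] \;=\; -1 + \E\bigl[(T_0-d+1)^+\bigr] \;=\; -1 + \sum_{k \geq d}\P(T_0 \geq k),
\]
and the tail sum vanishes as $d \to \infty$ since $\E[T_0]<\infty$. Choose $d^\star$ so that the drift is $\leq -1/2$ on $\{d \geq d^\star\}$. With the Lyapunov function $V(d)=d$, Foster's criterion yields $\E\bigl[\tau_{\{0,\ldots,d^\star\}} \mid D_0 = d\bigr] \leq 2d$ for $d \geq d^\star$. Since $\P(T_0 = 0) > 0$, from any state $d \geq 1$ the chain steps down to $d-1$ with probability at least $\alpha_{k^\star}$, so $(D_n)$ reaches $0$ from every state of the finite set $\{0,\ldots,d^\star\}$ in finite expected time; set $C := \max_{d \leq d^\star}\E[\tau_0 \mid D_0 = d] < +\infty$. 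The strong Markov property then gives $\E[\tau_0 \mid D_0 = d] \leq 2d + C$ for all $d$.

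\emph{Conclusion.} Averaging over $D_0 = T_0$,
\[
\E|W_1| \;\leq\; 2\,\E[T_0] + C \;=\; 2\sum_{k \geq k^\star}(1-\alpha_k) + C \;<\; +\infty.
\]
The main obstacle is the Foster--Lyapunov estimate, which is classical but requires some care; an equivalent self-contained route is to observe that $D_n + n/2$ is a supermartingale on $\{D_n \geq d^\star\}$ and apply optional stopping at $\tau_{\{0,\ldots,d^\star\}}$, then handle the finite set $\{0,\ldots,d^\star\}$ using irreducibility of the chain granted by $\alpha_{k^\star}>0$.
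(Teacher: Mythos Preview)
Your proof is correct and self-contained, whereas the paper's proof is a one-line reduction: after the reindexing $\bar\alpha_l:=\alpha_{l+k^\star}$, the variable $W_1$ coincides with the coalescence time $\tau[0]$ studied by Comets, Fern\'andez and Ferrari (2002), and the paper simply invokes their Proposition~5.1(ii), which gives $\E|\tau[0]|<\infty$ under $\sum_k(1-\bar\alpha_k)<\infty$. Your Lindley-type reformulation $D_n=\max(D_{n-1}-1,T_{-n})$ and the drift computation $\E[D_n-D_{n-1}\mid D_{n-1}=d]=-1+\sum_{k\ge d}\P(T_0\ge k)$ make the role of the summability hypothesis fully explicit, which the bare citation does not. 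One small point worth tightening: the clause ``so $(D_n)$ reaches $0$ from every state of $\{0,\ldots,d^\star\}$ in finite expected time'' is correct, but not solely because the chain steps down with probability $\alpha_{k^\star}$ --- the chain can also jump out of the finite set when $T_0>d^\star$. The fix uses ingredients you already have: each excursion out of $\{0,\ldots,d^\star\}$ has expected length at most $1+2\E[T_0]$ by your Foster bound, and the chain induced on the finite set (observed at its successive visits) is a finite-state chain in which $0$ is accessible from every state, hence has finite expected hitting times; combining these via the strong Markov property yields the finite constant $C$ and your final bound $\E|W_1|\le 2\E[T_0]+C$.
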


\begin{proof}
Letting $\bar{\alpha}_{l-k^{\star}}=\alpha_{l}$ for any $l\ge k^{\star}$, we have
\[
W_{1}=\sup\{m\leq 0:U_{j}< \bar{\alpha}_{j-m}\,,\,\,j=m,\ldots,0\}.
\]
Thus $W_{1}$ is defined exactly as $\tau[0]$ of display (4.2) in \cite{comets/fernandez/ferrari/2002}, substituting their $a_{k}$'s by our $\bar{\alpha}_{k}$'s. They proved (see display (4.6) and item (ii) Proposition 5.1 therein) that $\mathbb{E}|\tau[0]|<+\infty$ whenever $\sum_{k\ge 0}(1-\bar{\alpha}_{k})<+\infty$. It follows that $\mathbb{E}|W_{1}|<+\infty$ whenever $\sum_{k\ge k^{\star}}(1-\alpha_{k})<+\infty$.
\end{proof}

\begin{lemma}\label{lem2}
$\mathbb{E}|\theta[0]|<+\infty$ whenever $\sum_{k\ge k^{\star}}(1-\alpha_{k})<+\infty$.
\end{lemma}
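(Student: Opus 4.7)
The plan is to obtain $\mathbb{E}|\theta[0]|=\mathbb{E}|Y_Q|<\infty$ via a Wald-type decomposition of the search into blocks. With $Y_0:=0$, I would write
\[
|Y_Q| \;=\; \sum_{i=1}^Q \bigl(|Y_{i-1}-W_i|+|W_i-Y_i|\bigr),
\]
and then the proof reduces to showing (a) each summand has finite conditional mean, bounded uniformly in $i$ by a constant $\bar B$; and (b) $\mathbb{E}[Q]<\infty$.

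For (a), $|W_i-Y_i|$ is by definition the backward run of $\{U_j<\alpha_{k^\star}\}$ starting from $W_i$, so it has a shifted geometric law with mean $1/(1-\alpha_{k^\star})<\infty$. For $|Y_{i-1}-W_i|$, note that conditionally on the past of $\mathbf U$ strictly after $Y_{i-1}$, the coordinate $U_{Y_{i-1}}$ is uniform on $[\alpha_{k^\star},1)$ while $\{U_j\}_{j<Y_{i-1}}$ are fresh independent uniforms; hence $|Y_{i-1}-W_i|$ has the same law as $|W_1|$ under a restart in which the initial coordinate is conditioned on $[\alpha_{k^\star},1)$. The proof of Lemma~\ref{lemma:finiteexpect} rests only on the summability $\sum_{k\geq k^\star}(1-\alpha_k)<\infty$ (via the recipe of Comets--Fern\'andez--Ferrari), not on the unconditional law of the initial coordinate, so an almost verbatim adaptation yields finite expectation for this conditioned version, giving the required uniform bound $\bar B$.

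For (b), I would use very weak non-nullness to lower-bound the per-block coalescence probability. Pick $\bar m\geq k^\star+1$ with $p^\star:=\mathbb{P}(U^{0}_{-\bar m+1}\in E_{\bar m})>0$. On the event $\{|W_i-Y_i|=\bar m+1\}$, of probability $\alpha_{k^\star}^{\bar m}(1-\alpha_{k^\star})$, the interior uniforms $U_{Y_i+1},\ldots,U_{W_i-1}$ are conditionally i.i.d.\ uniform on $[0,\alpha_{k^\star})$; since $E_{\bar m}\subseteq[0,\alpha_{k^\star})^{\bar m}$ by construction, the conditional probability that they land in $E_{\bar m}$ equals $p^\star/\alpha_{k^\star}^{\bar m}$. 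Therefore each block achieves coalescence with probability at least $p:=(1-\alpha_{k^\star})p^\star>0$, and as the coalescence events across distinct blocks are built from disjoint uniforms they are independent, so $Q$ is stochastically dominated by $\mathrm{Geom}(p)$ and $\mathbb{E}[Q]\leq 1/p<\infty$. Wald's identity then yields $\mathbb{E}|\theta[0]|\leq \bar B\cdot\mathbb{E}[Q]<\infty$. The main technical obstacle is step (a): one must carefully verify that conditioning the restart coordinate $U_{Y_{i-1}}$ on $[\alpha_{k^\star},1)$ preserves the summability-based finite-expectation argument behind Lemma~\ref{lemma:finiteexpect}, so that $\bar B$ is genuinely finite and uniform in $i$.
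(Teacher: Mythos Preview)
Your approach matches the paper's: write $|Y_Q|=\sum_{i=1}^Q(Y_{i-1}-Y_i)$ as a sum of i.i.d.\ block lengths with finite mean (via Lemma~\ref{lemma:finiteexpect} for the $|Y_{i-1}-W_i|$ part and the geometric law of $W_i-Y_i-1$), then apply Wald's identity/optional sampling; in fact the paper is terser than you, asserting $\mathbb{E}Q\cdot\mathbb{E}B_1<\infty$ without spelling out the very-weak-non-nullness argument for $\mathbb{E}Q<\infty$ that you correctly supply. The ``technical obstacle'' you flag is not one: since $|Y_{i-1}-W_i|$ has the law of $|W_1|$ conditioned on the event $\{W_1\neq0\}=\{U_0\geq\alpha_{k^\star}\}$ of probability $1-\alpha_{k^\star}>0$, its expectation is at most $\mathbb{E}|W_1|/(1-\alpha_{k^\star})<\infty$ directly from Lemma~\ref{lemma:finiteexpect}, with no adaptation of that proof required.
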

\begin{proof}As observed in \cite{desantis/piccioni/2010}, $\{W_{i}-Y_{i}-1\}_{i\ge1}$ is a sequence of i.i.d. geometric r.v.'s with success probability $1-\alpha_{k^{\star}}$ and $\{Y_{i}-W_{i+1}\}_{i\ge0}$ (with $Y_{0}:=0$) is a sequence of i.i.d. r.v.'s distributed as $-W_{1}$, conditional to be non-zero. Moreover, Lemma \ref{lemma:finiteexpect} states that $\mathbb{E}|W_{1}|<+\infty$.
It follows that $\{B_{i}\}_{i\ge1}:=\{Y_{i-1}-Y_{i}-1\}_{i\ge1}$ is  a sequence of i.i.d. $\mathbb{N}$-valued r.v.'s with finite expectation. 
Thus
\(
\sum_{k=1}^{n}B_{i}-n\mathbb{E}B_{1}
\)
forms a martingale with respect to the filtration $\mathcal{F}(B_{1},\ldots,B_{i}\,:\,\,i\ge1)$ and we have by the optional sampling theorem 
\[
\mathbb{E}|\theta[0]|:=\mathbb{E}|Y_{Q}|=\mathbb{E}\left(\sum_{i=1}^{Q}B_{i}\right)=\mathbb{E}Q.\mathbb{E}B_{1}<+\infty.
\]
\end{proof}

We finally need the following lemma.

\begin{lemma}\label{lem3}
For any $k\ge k^{\star}$, $\theta[0]\in\Theta^{(1),[k]}[0]$.
\end{lemma}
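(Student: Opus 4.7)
Plan: I want to show that the coalescence time $\theta[0]=Y_{Q}$ for $F^{(1)}$ established in Lemma \ref{lem1} is also a coalescence time for $F^{(1),[k]}$ whenever $k\geq k^{\star}$. The central tool is the compatibility relation \eqref{eq:mainprop}, which asserts that $F^{(1)}$ and $F^{(1),[k]}$ agree on their common argument whenever the range $L^{(1)}$ at that step is at most $k$. My strategy is to argue by induction on $i\in\{\theta[0],\dots,0\}$ that, for any initial $k$-memory, the forward $F^{(1),[k]}$-recursion starting from $\theta[0]$ produces, at each step, the same symbol as the forward $F^{(1)}$-recursion starting from any infinite past extension. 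Combined with Lemma \ref{lem1}, which says the $F^{(1)}$-recursion is independent of the infinite past extension, this yields independence of the $F^{(1),[k]}$-output from the initial $k$-memory, i.e.\ $\theta[0]\in\Theta^{(1),[k]}[0]$.

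The induction is driven by the structural properties of the construction of $\theta[0]$ in the proof of Lemma \ref{lem1}. Within each $E$-block $(Y_{j},W_{j})$ the uniforms lie below $\alpha_{k^{\star}}$, so $L^{(1)}\leq k^{\star}\leq k$ and \eqref{eq:mainprop} applies directly, giving $F^{(1)}=F^{(1),[k]}$. Within each $W_{j}$-interval $[W_{j},Y_{j-1}]$, the defining property $U_{i}<\alpha_{i-W_{j}+k^{\star}}$ ensures the $F^{(1)}$-arrow from time $i$ never reaches further back than time $W_{j}-k^{\star}$, i.e.\ into the block of $k^{\star}$ symbols already fixed by the $E$-event at stage $j$. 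Since $k\geq k^{\star}$, the $k$-window used by $F^{(1),[k]}$ at time $i$ contains this determined block, and as soon as the induction has established coalescence through time $i-1$, the effective past seen by $F^{(1),[k]}$ at time $i$ coincides across all initial $k$-memories. Applying \eqref{eq:mainprop} step by step then propagates coalescence forward all the way to time $0$.

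The main obstacle is the bookkeeping between the hypothesis ``$L^{(1)}\leq k$'' required by \eqref{eq:mainprop} and the weaker structural statement ``the arrow at time $i$ falls inside the already-determined region,'' since in a long $W_{j}$-interval the range $L^{(1)}$ can a priori exceed $k$. I expect to handle this by exploiting that, once coalescence has been inductively established up to time $i-1$, the last $k$ symbols preceding $i$ in the $F^{(1),[k]}$-construction are independent of the initial $k$-memory; thus even on the portion $U_{i}\geq \alpha_{k}(a_{i-k}^{i-1})$ where $F^{(1),[k]}$ falls back on its Markov fill-in $I^{[k]}(\cdot\,|\,a_{i-k}^{i-1})$, the output depends only on already coalesced symbols and on $U_{i}$, hence is the same for both initial $k$-memories. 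This closes the induction and yields $\theta[0]\in\Theta^{(1),[k]}[0]$ for every $k\geq k^{\star}$.
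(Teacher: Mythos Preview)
Your proposal is correct and follows essentially the same route as the paper: use \eqref{eq:mainprop} on the block $\{Y_Q+1,\dots,W_Q-1\}$ (where $L^{(1)}\le k^{\star}\le k$) to transfer the $E$-coalescence to $F^{(1),[k]}$, then propagate forward, and handle the first time $L^{(1)}>k$ by observing that by then the last $k$ symbols of the $F^{(1),[k]}$-construction are already determined, so the $k$-step Markov update stays coalesced to time $0$. Two small points of wording: the $E$-event occurs only at stage $Q$ (for $j<Q$ the block before $W_j$ is fixed by propagation, not by a fresh $E$-event), and once $L^{(1)}>k$ the identity $F^{(1),[k]}=F^{(1)}$ need no longer hold, so from that point your induction should track independence from the initial $k$-memory rather than agreement with $F^{(1)}$---which is exactly what you do in the obstacle paragraph and what the paper does as well.
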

\begin{proof}
For any $k\ge k^{\star}$, $F^{(1),[k]}$ and $L^{(1),[k]}$ satisfy \eqref{eq:mainprop}. This implies that, in the interval $\{Y_{Q},\ldots,W_{Q}-1\}$,  coalescence occurs as well for $F^{(1),[k]}$, i.e. $U_{-\theta[0]}^{W_{Q}-1}\in E^{[k]}_{\theta[0]-W_{Q}}$. Both constructed chains are equals until the first time $F^{(1)}$ uses a range larger than $k$. But at this moment, due to the definition of the $W_{i}$'s, we have already perfectly simulated at least $k$ symbols of both chains, and therefore, we can continue constructing until time $0$ because the ranges of $F^{(1),[k]}$ are smaller of equal to $k$.  It follows that $Y_{Q}$ is a coalescence time for $F^{(1),[k]}$, and therefore, $\theta[0]\in\Theta^{(1),[k]}[0]$ for any $k\ge k^{\star}$.
\end{proof}

\subsubsection{Proof of Theorem \ref{coro:expli2}} By definition of $F^{(1)}$ and $L^{(1)}$, we have for any sufficiently large $k$, 
\[
\bigcup_{\underline{a}}\bigcup_{i=\theta[0]}^{0}\left\{L^{(1)}\left(\underline{a}F^{(1)}_{\{\theta[0],i-1\}}(\underline{a},U_{\theta[0]}^{i-1}),U_{i}\right)>k\right\}\subset\bigcup_{i=\theta[0]}^{0}\{U_{i}>\alpha_{k}\}.
\]
By Lemmas \ref{lem1} and \ref{lem3}, Lemma \ref{theo:1} applies and gives, for sufficiently large $k$'s
\begin{align*}
\bar{d}({\bf X},{\bf X}^{[k]})\leq \mathbb{P}\left(\bigcup_{i=\theta[0]}^{0}\{U_{i}>\alpha_{k}\}\right)=\mathbb{P}\left(\sum_{i=0}^{|\theta[0]|}{\bf 1}\{U_{i}>\alpha_{k}\}\ge1\right)\leq \mathbb{E}\left(\sum_{i=0}^{|\theta[0]|}{\bf 1}\{U_{i}>\alpha_{k}\}\right)
\end{align*}
where we used the Markov inequality for the last inequality.
Using the fact that $\theta[0]$ is a stopping time in the past for the sequence $U_{i}, i\leq 0$, and that it has finite expectation by Lemma \ref{lem2}, we can apply the Wald's equality to obtain 
\[
\bar{d}({\bf X},{\bf X}^{[k]})\leq \mathbb{E}|\theta[0]|.\mathbb{E}{\bf 1}\{U_{i}>\alpha_{k}\}=\mathbb{E}|\theta[0]|.(1-\alpha_{k}).
\]

\subsection{{\bf Proof of Theorem \ref{coro:italia}}} We divide this proof into two parts. First, we prove technical lemmas allowing to use Lemma~\ref{theo:1}. Then, we prove the theorem.

\subsubsection{Technical lemmas}
Using the quantity $\ell(U_{-\infty}^{0})$ defined  by \eqref{eq:ell}, we define
%
\begin{equation}\label{eq:theta0}
\theta[0]:=\sup\{j\leq 0:\ell(U_{-\infty}^{i})\leq i-j\,,\,\,i=j,\ldots,0\}.
\end{equation}
\begin{lemma}\label{lemma:ahahah}
$\theta[0]$, defined by \eqref{eq:theta0}, belongs to $\Theta^{(1)}[0]\cap \Theta^{(1),[k]}[0]$  for any $k\geq 0$ and 
\begin{equation}\label{eq:italia}
 \mathbb{P}\left(\bigcup_{\underline{a}}\bigcup_{i=\theta[0]}^{0}\left\{L^{(1)}\left(\underline{a}F^{(1)}_{\{\theta[0],i-1\}}(\underline{a},U_{\theta[0]}^{i-1}),U_{i}\right)>k\right\}\right)\leq\mathbb{P}\left(\bigcup_{i=\theta[0]}^{0}\left\{\ell\left(U_{-\infty}^{i}\right)>k\right\}\right).
\end{equation}
\end{lemma}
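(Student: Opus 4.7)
The plan is to link the quantity $\ell(U_{-\infty}^i)$ directly to the range function $L^{(1)}$ of the partition $\{\mathcal{I}^{(1)}(\underline{a})\}_{\underline{a}}$. The key observation is this: whenever $U_l\in I_0^{(1)}(a|\emptyset)$ for some $a$, the update $F^{(1)}$ outputs $a$ with range $0$, independently of any past; hence, for any initial $\underline{a}$, the forward-constructed string $\tilde{x}_j^{i-1}:=F^{(1)}_{\{j,i-1\}}(\underline{a},U_j^{i-1})$ automatically satisfies the defining constraint of $J_{i-j}(U_j^{i-1})$ at every position in $[j,i-1]$. Setting $j_0:=\ell(U_{-\infty}^i)$, the definition of $\theta[0]$ gives $j_0\le i-\theta[0]$ for every $i\in[\theta[0],0]$, so the positions $i-j_0,\dots,i-1$ all lie in $[\theta[0],i-1]$ and consequently $\tilde{x}_{i-j_0}^{i-1}\in J_{j_0}(U_{i-j_0}^{i-1})$. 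Combining this with $U_i<A_{j_0}(U_{i-j_0}^{i-1})$, which holds by the very definition of $\ell$, yields $U_i<\alpha_{j_0}(\tilde{x}_{i-j_0}^{i-1})$, and unpacking the telescoping identity $\sum_{l=0}^{j_0}|I_l^{(1)}(a|\tilde{x}_{i-l}^{i-1})|=\alpha_{j_0}(a|\tilde{x}_{i-j_0}^{i-1})$ together with the definition of $L^{(1)}$, this is precisely the pointwise bound
\[
L^{(1)}(\underline{a}\,\tilde{x}_{\theta[0]}^{i-1},U_i)\le j_0=\ell(U_{-\infty}^i).
\]

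From this pointwise bound the first membership is immediate: for every $i\in[\theta[0],0]$ the range of $F^{(1)}$ at step $i$ is at most $i-\theta[0]$, so every constructed symbol depends only on $U_{\theta[0]}^i$ and not on $\underline{a}$, whence $\theta[0]\in\Theta^{(1)}[0]$. For $\theta[0]\in\Theta^{(1),[k]}[0]$ I would argue by induction on $i\in[\theta[0],0]$ the claim that $F^{(1),[k]}_{\{\theta[0],i\}}(\underline{a},U_{\theta[0]}^i)$ does not depend on $\underline{a}$. At $i=\theta[0]$, the bound above gives $L^{(1)}\le 0$, so $U_{\theta[0]}$ falls in some $I_0^{(1)}(a|\emptyset)$ and both $F^{(1)}$ and $F^{(1),[k]}$ return the same universal symbol. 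For the inductive step, if $i\le\theta[0]+k$ the range of $F^{(1)}$ is $\le k$, so by property \eqref{eq:mainprop} $F^{(1),[k]}$ agrees with $F^{(1)}$ at step $i$ and the conclusion follows from the first membership; if $i>\theta[0]+k$, then $F^{(1),[k]}$ reads only the $k$ immediately preceding constructed symbols, which all lie in $[\theta[0]+1,i-1]$ and are $\underline{a}$-independent by the induction hypothesis.

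The inequality \eqref{eq:italia} now follows from the sample-path inclusion
\[
\bigcup_{\underline{a}}\bigcup_{i=\theta[0]}^{0}\{L^{(1)}(\underline{a}F^{(1)}_{\{\theta[0],i-1\}}(\underline{a},U_{\theta[0]}^{i-1}),U_i)>k\}\;\subseteq\;\bigcup_{i=\theta[0]}^{0}\{\ell(U_{-\infty}^i)>k\},
\]
which is the immediate contrapositive of the pointwise bound $L^{(1)}\le\ell$; monotonicity of $\mathbb{P}$ then delivers the desired probability estimate. The only delicate step is the pointwise bound itself, which relies crucially on the universality of $I_0^{(1)}(a|\emptyset)$: this is what drives every forward trajectory into the requisite $J$-set, and is also what allows $\theta[0]$, defined purely through $\ell$, to serve simultaneously as a coalescence time for $F^{(1)}$ and for every Markov approximation $F^{(1),[k]}$.
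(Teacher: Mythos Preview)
Your proof is correct and follows the same core idea as the paper's: forward-constructed strings under $F^{(1)}$ automatically lie in the sets $J_k$, so $A_k(U_{-k}^{-1})\le\alpha_k(\tilde{x}_{-k}^{-1})$ and hence $L^{(1)}\le\ell$ pointwise, from which both the coalescence claims (via the argument of Lemma~\ref{lemma:weakK}, which you inline) and the inclusion \eqref{eq:italia} follow. Your derivation of \eqref{eq:italia} is slightly more direct than the paper's---you apply the pointwise bound immediately, whereas the paper first intersects with $\{\theta[0]\le i-k-1\}$ and shifts the construction's starting point to $i-k$ before invoking the contrapositive---but the substance is identical.
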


\begin{proof}

For any $U_{-k}^{-1}$, the way the sets of strings $\{\underline{z}F^{(1)}_{\{-k,-1\}}(\underline{z},U_{-k}^{-1})\}_{\underline{z}}$ and $J_{k}(U_{-k}^{-1})$ are defined ensure that the former is included in the later. 
It follows  that, for any $U_{-k}^{-1}$,
\[
A_{k}(U_{-k}^{-1}):=\inf_{x_{-k}^{-1}:\underline{x}\in J_{k}(U_{-k}^{-1})}\sum_{a\in A}\inf_{\underline{z}}P(a|x_{-k}^{-1}\underline{z})\leq \sum_{a\in A}\inf_{\underline{z}}P(a|F^{(1)}_{\{-k,-1\}}(\underline{z},U_{-k}^{-1})\underline{z}).
\]
As the inequality $A_{0}\leq \sum_{a\in A}\inf_{\underline{z}}P(a|\underline{z})$ is also true, we deduce that, for any $k\ge0$, any $U_{-\infty}^{0}\in [0,1[^{-\mathbb{N}}$ and any $\underline{z}\in A^{-\mathbb{N}}$,
\[
\ell(U_{-\infty}^{0})\leq k\Rightarrow L^{(1)}\left(\underline{z}F^{(1)}_{\{-k,-1\}}(\underline{z},U_{-k}^{-1}),U_{0}\right)\le k\enspace.
\]
By recurrence, this means that, for all $\theta[0]\le i\le 0$, $F^{(1)}_{\{\theta[0],i\}}(\underline{z},U_{\theta[0]}^{i})$ does not depend on $\underline{z}$. Hence, $\theta[0]$ is also a coalescence time for the update function $F^{(1)}$, that is $\theta[0]\in\Theta^{(1)}[0]$.  Observe that we have proved, more specifically, that $\theta[0]\in\Theta'^{(1)}[0]$, where $\Theta'^{(1)}[0]\subset\Theta^{(1)}[0]$ is defined by \ref{eq:Theta'} using $F^{(1)}$ and $L^{(1)}$. By Lemma \ref{lemma:weakK} below, this implies that $\theta[0]\in \Theta^{(1),[k]}[0]$ for any $k\geq 0$ as well.

\vspace{0.2cm}

\noindent We now prove the second statement of the lemma. If there exist $i\ge k$, $U_{-i}^{-1}$ and $\underline{z}$ such that $L^{(1)}\left(\underline{z}F^{(1)}_{\{-i,-1\}}(\underline{z},U_{-i}^{-1}),U_{0}\right)> k$, then there exists some past $\underline{a}$ (take $\underline{a}=\underline{z}F^{(1)}_{\{-i,-k-1\}}(\underline{z},U_{-i}^{-k-1})$ for instance) such that $L^{(1)}\left(\underline{a}F^{(1)}_{\{-k,-1\}}(\underline{a},U_{-k}^{-1}),U_{0}\right)> k$.

We now have the following sequence of inclusions
\begin{align*}
\bigcup_{\underline{a}}\bigcup_{i=\theta[0]}^{0}&\left\{L^{(1)}\left(\underline{a}F^{(1)}_{\{\theta[0],i-1\}}(\underline{a},U_{\theta[0]}^{i-1}),U_{i}\right)>k\right\}\\
&=\bigcup_{\underline{a}}\bigcup_{i=\theta[0]}^{0}\left\{L^{(1)}\left(\underline{a}F^{(1)}_{\{\theta[0],i-1\}}(\underline{a},U_{\theta[0]}^{i-1}),U_{i}\right)>k\right\}\cap\{\theta[0]\leq i-k-1\}\\
&\subset\bigcup_{\underline{a}}\bigcup_{i=\theta[0]}^{0}\left\{L^{(1)}\left(\underline{a}F^{(1)}_{\{i-k,i-1\}}(\underline{a},U_{i-k}^{i-1}),U_{i}\right)>k\right\}\cap\{\theta[0]\leq i-k-1\}\\
&\subset\bigcup_{i=\theta[0]}^{0}\left\{\ell\left(U_{-\infty}^{i}\right)>k\right\}.
\end{align*}
This concludes the proof of the lemma.
\end{proof}

Recall the definition \eqref{eq:Theta'} of $\Theta'[0]$ for generic range partitions of a weakly non-null kernel $P$. We will need the following lemma.

\begin{lemma}\label{lemma:weakK}
For any $k\ge0$, $\Theta'[0]\subset\Theta^{[k]}[0]$.
\end{lemma}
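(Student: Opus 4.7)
The plan is to exploit the compatibility relation \eqref{eq:mainprop}, which says that whenever the range $L$ used by $F$ at a given step is at most $k$, the update functions $F$ and $F^{[k]}$ produce the same symbol (and use the same range). Fix $i\in\Theta'[0]$; I need to show $F^{[k]}_{[i,0]}(\underline{b},U_i^0)$ does not depend on the past $\underline{b}$.

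First, a short induction on $l\in\{i,\ldots,0\}$ shows that the $F$-built symbols $X_l:=F_{[i,l]}(\underline{a},U_i^l)$ are independent of $\underline{a}$: the bound $L(\underline{a}F_{\{i,l-1\}}(\underline{a},U_i^{l-1}),U_l)\leq l-i$ coming from $\Theta'[0]$ forces $F$ at step $l$ to reach back only into the freshly built block $X_i\cdots X_{l-1}$.

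The core of the argument is then a simultaneous induction on $l\in\{i,\ldots,0\}$ for the $F^{[k]}$-built quantities $Y_l^{(\underline{b})}:=F^{[k]}_{[i,l]}(\underline{b},U_i^l)$, establishing (a) $Y_l^{(\underline{b})}$ is independent of $\underline{b}$, with common value $Y_l$, and (b) if $l-i<k$, then $Y_l=X_l$. The base case $l=i$ holds because $L(\underline{a},U_i)\leq 0$ forces $U_i\in I_0(a|\emptyset)$, an interval shared by both partitions, so $Y_i=X_i=a$ for every $\underline{b}$. For the inductive step to $l=j$ I would distinguish two regimes. When $j-i\geq k$, the $k$-symbol window consulted by $F^{[k]}$ equals $Y_{j-k}^{j-1}$, which by (a) does not involve $\underline{b}$; so (a) propagates automatically at step $j$, with (b) vacuous. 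When $j-i<k$, every previous index $l'$ satisfies $l'-i<k$, so (b) of the IH yields $Y_{l'}=X_{l'}$; the window consulted by $F^{[k]}$ is therefore $(b_{j-k},\ldots,b_{i-1},X_i,\ldots,X_{j-1})$. The $\Theta'[0]$ hypothesis supplies $L(\underline{a}X_i^{j-1},U_j)\leq j-i<k$ for every $\underline{a}$, so \eqref{eq:mainprop}, applied to a past $\underline{a}$ whose entries at positions $j-k,\ldots,i-1$ agree with $\underline{b}$'s, identifies the $F^{[k]}$-value with $F(\underline{a}X_i^{j-1},U_j)=X_j$, proving both (a) and (b) at step $j$.

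Taking $l=0$ in (a) shows $F^{[k]}_{[i,0]}(\underline{b},U_i^0)$ is independent of $\underline{b}$, proving $i\in\Theta^{[k]}[0]$. The one subtle point is that (b) cannot be expected to hold once $l-i\geq k$: in that regime $F$ may require range strictly greater than $k$, and \eqref{eq:mainprop} no longer forces the outputs of $F$ and $F^{[k]}$ to coincide. This is harmless for the conclusion, because as soon as $l-i\geq k$ the $F^{[k]}$ window already lies entirely inside $\{i,\ldots,l-1\}$, so $\underline{b}$-independence propagates by itself without needing the auxiliary agreement $Y_l=X_l$.
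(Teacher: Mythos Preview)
Your proof is correct and follows essentially the same approach as the paper's: both use \eqref{eq:mainprop} to show that $F$ and $F^{[k]}$ coincide during the initial segment of length at most $k$ (where the $\Theta'[0]$ bound forces ranges $\leq k$), and then invoke the $k$-Markov structure of $F^{[k]}$ to propagate coalescence through the remaining indices. The paper packages this as a case split on whether $\theta[0]\geq -k$ or $\theta[0]<-k$, whereas you run it as a single step-by-step induction with the invariant (b); the content is the same.
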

\begin{proof}
Let $\theta[0]\in\Theta'[0]$. For any fixed $k\ge0$, we separate two cases.
\begin{enumerate}
\item If $\theta[0]\ge -k$, then, by the definition of $\Theta'[0]$, the ranges used by $F$ from $\theta[0]$ to $0$ are all smaller than or equals to $k$, and therefore using \eqref{eq:mainprop}, we have that the length used by $F^{[k]}$ in the same interval of indexes are the same and the constructed symbols are the same as well. Thus $\theta[0]\in\Theta^{[k]}[0]$.
\item If $\theta[0]<-k$, then, by the definition of $\Theta'[0]$, we can apply the same method as in the preceding case, and obtain that $\theta[0]$ is a coalescence time for $F^{[k]}$ for the time indexes from $\theta[0]$ up to $\theta[0]+k$. But $\theta[0]$ is also a coalescence time for the time indexes from $\theta[0]+k+1$ up to $0$, since the ranges used by $F^{[k]}$ are always smaller than or equal to $k$. Thus, in this case also, $\theta[0]\in\Theta^{[k]}[0]$.
\end{enumerate}
\end{proof}
\subsubsection{Proof of Theorem \ref{coro:italia}}
In the conditions of this theorem, by Theorem 1 in \cite{desantis/piccioni/2010}, $\theta[0]$ is $\mathbb{P}$-a.s. finite. Moreover, by Lemma \ref{lemma:ahahah}, $\theta[0]\in \Theta^{(1)}[0]\cap\Theta^{(1),[k]}[0]$ for any $k\ge0$. Thus we can apply Lemma \ref{theo:1}, and obtain, using  Lemma \ref{lemma:ahahah}
\begin{align*}
\bar{d}({\bf X},{\bf X}^{[k]})\leq \mathbb{P}\left(\bigcup_{i=\theta[0]}^{0}\left\{\ell\left(U_{-\infty}^{i}\right)>k\right\}\right)
\end{align*}
and moreover
\begin{align*}
\mathbb{P}\left(\bigcup_{i=\theta[0]}^{0}\left\{\ell\left(U_{-\infty}^{i}\right)>k\right\}\right)&=\mathbb{P}\left(\sum_{i=\theta[0]}^{0}{\bf 1}\left\{\ell\left(U_{-\infty}^{i}\right)>k\right\}\ge1\right)\\
&\leq \E\paren{\sum_{i=\theta[0]}^{0}{\bf 1}\left\{\ell\left(U_{-\infty}^{i}\right)>k\right\}}\enspace.
\end{align*}
Consider the $\sigma$-algebra $\Fcal_{k}$ generated by $U_{-k}^{0}, \,k\ge0$. Then, $\ell(U_{-\infty}^{0})$ is a stopping time with respect to $\Fcal_{k}$ and, by definition, so is ${\theta}[0]$. Moreover, $\ell(U_{-\infty}^{i})$ is independent of $U_{i+1}^{0}$ by independence of the $U_{j}$'s. Finally, by stationarity, $\ell\left(U_{-\infty}^{i}\right)\stackrel{\mathcal{D}}{=}\ell\left(U_{-\infty}^{0}\right)$, hence $\E\paren{{\bf 1}\left\{\ell\left(U_{-\infty}^{i}\right)>k\right\}}=\E\paren{{\bf 1}\left\{\ell\left(U_{-\infty}^{0}\right)>k\right\}}$, for any $i\in\Z$. By Theorem 1 in \cite{desantis/piccioni/2010}, $\theta[0]$ has finite expectation, hence we can use Wald equality to obtain
\begin{equation}\label{eq:contEtheta0finite}
\bar{d}({\bf X},{\bf X}^{[k]})\leq \E|\theta[0]|\mathbb{P}(\ell\left(U_{-\infty}^{0}\right)>k)\enspace. 
\end{equation}
This concludes the proof of Theorem \ref{coro:italia}.

\subsection{{\bf Proof of Theorem \ref{lemma:1}}}
Recall the definition of the set $\Theta'[0]$ given by \eqref{eq:Theta'}. If $\theta[0]\in\Theta'[0]$ and ${\theta}[0]\ge-k$, then we are sure that the range $L\left(\underline{a}F_{\{\theta[0],i-1\}}(\underline{a},U_{\theta[0]}^{i-1}),U_{i}\right)\leq k$ for any $i={\theta}[0],\ldots,0$ and any $\underline{a}$, therefore
\[
\bigcup_{\underline{a}}\bigcup_{i=\theta[0]}^{0}\left\{L\left(\underline{a}F_{\{\theta[0],i-1\}}(\underline{a},U_{\theta[0]}^{i-1}),U_{i}\right)>k\right\}\subset\{{\theta}[0]<-k\}.
\]
By Lemma \ref{lemma:weakK}, any $\theta[0]\in \Theta'[0]$ also belongs to $\Theta^{[k]}[0]$ for any $k\ge0$. We can thus apply Lemma \ref{theo:1} and conclude the proof of the theorem.
\bibliographystyle{jtbnew}
\bibliography{sandro_bibli}

\appendix
\section{Local continuity with respect to the past $\underline{1}$}\label{sec:extgallogarcia}

In this section, we assume that $A=\{1,2\}$, and that $P$ has only one discontinuity point, the point $\underline{1}=\ldots111$. We refer the interested reader to \cite{gallo/garcia/2011} for examples with countable alphabets, and discontinuities in more complicated set of pasts.  To begin, we need the following definition.

\begin{defi}[Local continuity with respect to the past $\underline{1}$]
We say that a kernel $P$ on $\{1,2\}$ is \emph{locally continuous with respect to the past $\underline{1}$} if
\[
\forall i\geq 0 ,\qquad \inf_{a_{-k}^{-1}}\sum_{a\in A}\inf_{\underline{z}}P(a|1^{i}2a_{-k}^{-1}\underline{z})
\]
converges to $1$ as $k$ diverges. We distinguish two particular situations of interest.
\begin{itemize}
\item We say that $P$ is \emph{strongly locally continuous with respect to $\underline{1}$} if there exists an integer function $\ell:\mathbb{N}\rightarrow\mathbb{N}$ such that 
\begin{equation}\label{eq:stronglocal}
\forall i\geq 0,\qquad \inf_{a_{-k}^{-1}}\sum_{a\in A}\inf_{\underline{z}}P(a|1^{i}2a_{-k}^{-1}\underline{z})=1
\end{equation}
for any $k\ge \ell(i)$, and
\item we say that $P$ is \emph{uniformly locally continuous with respect to $\underline{1}$} if
\begin{equation}\label{eq:uniflocal}
\alpha^{\underline{1}}_{k}:=\inf_{i\geq0}\inf_{a_{-k}^{-1}}\sum_{a\in A}\inf_{\underline{z}}P(a|1^{i}2a_{-k}^{-1}\underline{z})
\end{equation}
converges to $1$ as $k$ diverges.
\end{itemize}
\end{defi}

Strongly locally continuous kernels are known as \emph{probabilistic context trees}, a model that have been introduced by \cite{rissanen/1983} as a \emph{universal data compression model}. It was first consider, from the ``CFTP point of view'', by \cite{gallo/2009}. 
The kernel $\bar{P}$ is a simple example which is strongly and uniformly locally continuous with respect to $\underline{1}$. 

\vspace{0.2cm}

\paragraph{{\bf Assumption 1}:} $P$ is strongly locally continuous with respect to $\underline{1}$.

\vspace{0.2cm}

\paragraph{{\bf Assumption 2}:} $P$ is uniformly locally continuous with respect to $\underline{1}$.

\begin{notation}\label{nota}
Let us introduce the following notation.
\begin{itemize}
\item Stationary chains compatible with kernels satisfying Assumptions i=1 and 2 are denoted ${\bf X}^{(i)}$, and the corresponding canonical $k$-steps Markov approximations are denoted ${\bf X}^{(i),[k]}$. 
\item We use the notations $r^{(i)}_{0}:=\alpha_{0}$ for i=1 and 2, and for $k\ge1$,
\begin{align*}
r^{(1)}_{k}&:= r^{(1)}_{k-1}\vee (1-(1-\alpha(2))^{\ell^{-1}(k)})\\
r^{(2)}_{k}&:=r^{(2)}_{k-1}\vee (1-(1-\alpha^{\underline{1}}_{k})/\alpha(2))
\end{align*}
where  $\ell$ and $\alpha^{\underline{1}}_{k}$ are the parameters of the kernels under assumptions 1 and 2 respectively.
\item For i=1 and 2
\begin{equation}\label{eq.bound.HC}
v_{k}^{(i)}:=\sum_{j=1}^{k}\sum_{
\begin{array}{c}
t_{1},\ldots,t_{j}\geq1\\
t_{1}+\ldots+t_{j}=k
\end{array}
}\prod^{j}_{m=1}(1-r^{(i)}_{t_{m}-1})\prod_{l=0}^{t_{m}-2}r^{(i)}_{l}
\end{equation}
where $\prod_{l=0}^{-1}:=1$.
\item And finally, for any $k\ge0$, let 
\begin{equation}\label{eq:nota}
u_{k}:= \lfloor k\alpha(2)/2\rfloor\mathbb{P}\left(\left|\sum_{j=0}^{\lfloor k\alpha(2)/2\rfloor}\xi_{j}-\frac{\lfloor k\alpha(2)/2\rfloor}{\alpha(2)}\right|>k/2\right)
\end{equation}
It is well-known that this sequence goes exponentially fast to $0$ (see \cite{kallenberg/2002} for instance). An explicit upper bound is derived in Appendix \ref{sec.conc.geom}.
\end{itemize}
\end{notation}

\begin{coro}\label{coro:nonexpli}
Under the weak non-nullness assumption, we have for i=1 and 2 that, if $\sum_{k\ge1}\prod_{i=0}^{k-1}r^{(i)}_{k}=\infty$, 
\begin{equation}\label{eq:expli1}
\bar{d}({\bf X}^{(i)},{\bf X}^{(i),[k]})\leq u_{k}+v_{\lfloor k\alpha(2)/2\rfloor}^{(i)}\rightarrow0.
\end{equation}
\end{coro}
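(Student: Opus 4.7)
The plan is to apply Theorem \ref{lemma:1} with a range partition $\{\mathcal{I}^{(i)}(\underline{a})\}_{\underline{a}}$ tailored to each of Assumptions 1 and 2. The essential feature to exploit is that, under local continuity with respect to $\underline{1}$, every occurrence of the symbol $2$ separates the future dependence from the deep past, providing a natural regeneration structure. I would use it to split the coalescence analysis into two pieces: the number of regenerations inside the window $[-k+1, 0]$, and a house-of-cards bound for the coalescence between consecutive regenerations.

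First, I would construct the range partitions in the spirit of \cite{gallo/garcia/2011} and \cite{gallo/2009}. Writing any past as $\underline{a} = \underline{b}\,2\,1^{t(\underline{a})}$, with $t(\underline{a})$ the number of trailing $1$'s, the local-continuity assumption ensures that, once the most recent $2$ is located, only finitely many extra symbols (Assumption 1, controlled by $\ell$) or continuity-controlled extra symbols (Assumption 2, controlled by $\alpha^{\underline{1}}_k$) are needed to update. One can arrange the partition so that $L^{(i)}(\underline{a},U)\le k$ holds whenever the cumulative mass used at the current site is at most $r^{(i)}_k$; the $\vee$-recursion in Notation \ref{nota} simply records the monotone envelope of these thresholds. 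Taking $\theta[0]$ as the maximal element of $\Theta'^{(i)}[0]$ from \eqref{eq:Theta'}, Theorem \ref{lemma:1} gives
\[
\bar{d}({\bf X}^{(i)}, {\bf X}^{(i),[k]}) \le \mathbb{P}(\theta[0] < -k).
\]

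Next, I would bound $\mathbb{P}(\theta[0] < -k)$ by introducing the number $N_k$ of indices $i \in \{-k+1,\dots,0\}$ with $U_i \in I^{(i)}(2|\emptyset)$, which is a sum of i.i.d.\ Bernoulli$(\alpha(2))$ random variables by weak non-nullness. Setting $m_k := \lfloor k\alpha(2)/2 \rfloor$, the event $\{N_k < m_k\}$ is equivalent to the event that a sum of $m_k$ i.i.d.\ geometric-$\alpha(2)$ waiting times exceeds $k$, whose probability is bounded by $u_k$ via the concentration inequality for sums of geometric variables derived in Appendix \ref{sec.conc.geom}. On $\{N_k \ge m_k\}$, the $2$-occurrences act as regeneration points for the range function: by the construction of $\mathcal{I}^{(i)}$, each regeneration resets the problem, so the coalescence event within the window reduces to non-coalescence of a house-of-cards on $m_k$ renewal blocks with effective continuity rates $r^{(i)}_k$. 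The probability that this reduced house-of-cards has not coalesced within $m_k$ steps is exactly $v^{(i)}_{m_k}$, by the same combinatorial expansion as \eqref{achier}.

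Combining both pieces yields $\bar{d}({\bf X}^{(i)}, {\bf X}^{(i),[k]}) \le u_k + v^{(i)}_{m_k}$; the vanishing to $0$ follows from the exponential decay of $u_k$ together with the divergence assumption $\sum_{k\ge1}\prod_{j=0}^{k-1} r^{(i)}_j = +\infty$, which by the argument of \cite{comets/fernandez/ferrari/2002} giving \eqref{achier} ensures $v^{(i)}_k \to 0$. The main obstacle is the first step: constructing range partitions whose range function is exactly controlled by the rates $r^{(i)}_k$ and whose $2$-regeneration structure cleanly matches a house-of-cards on renewal blocks. This requires a careful adaptation of the partition constructions of \cite{gallo/2009} (for the context-tree setting of Assumption 1) and \cite{gallo/garcia/2011} (for the uniformly locally continuous setting of Assumption 2), and the verification that the $\vee$-maximum defining $r^{(i)}_k$ correctly combines the weak non-nullness mass $\alpha(2)$ with the local-continuity parameter at scale $k$.
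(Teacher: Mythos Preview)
Your proposal is correct and follows essentially the same route as the paper. The paper's own proof simply invokes \cite{gallo/garcia/2011} for the construction of the range partitions, the definition of a coalescence time $\Lambda^{(i)}[0]\in\Theta'[0]$, and the bound $\mathbb{P}(\Lambda^{(i)}[0]<-k)\le u_k+v^{(i)}_{\lfloor k\alpha(2)/2\rfloor}$, and then applies Theorem~\ref{lemma:1}; your sketch unpacks exactly these ingredients (the $2$-regeneration structure, the geometric-waiting-time concentration giving $u_k$, and the house-of-cards bound giving $v^{(i)}_{m_k}$) and correctly identifies the partition construction as the only substantive technical step deferred to \cite{gallo/2009,gallo/garcia/2011}.
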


%
The quantity defined on display \eqref{eq.bound.HC} is related to the house of card process presented in Section \ref{sec:auxi} (see equation \eqref{bound2}). We provide in Propositions \ref{prop:bfg} and \ref{prop:expli} explicit upper-bounds on the term \eqref{eq.bound.HC} that can be plugged in \eqref{eq:expli1}. The term \eqref{eq:nota} is studied in Corollary \ref{coro.contuk}.
It follows  in particular from these propositions that, whenever $r_{k}^{(i)}$ is not exponentially decreasing, the leading term in \eqref{eq:expli1} is $v_{k}^{(i)}$, and therefore, we obtain for some constant $C>1$ and any sufficiently large $k$
\[
\bar{d}({\bf X}^{(i)},{\bf X}^{(i),[k]})\leq Cv_{\lfloor k\alpha(2)/2\rfloor}^{(i)}.
\]
For instance, Proposition \ref{prop:expli}, states that, if $1-r^{(i)}_{k}=\frac{r}{k}+s_{k}$, $k\ge1$ with $r\in(0,1)$ and  $\{s_{k}\}_{k\ge1}$ is any summable sequence, we obtain for some constant $C>1$
\begin{equation}\label{eq:result_weak_continuity}
\bar{d}({\bf X}^{(i)},{\bf X}^{(i),[k]})\leq C\frac{(\log k)^{3+r}}{k^{2-(1+r)^{2}}}.
\end{equation}

\begin{proof}
Under Assumptions 1 and 2 with weak non-nullness, \cite{gallo/garcia/2011} constructed a set of range partitions generating a set of coalescence times $\Theta'[0]$ which is a.s. non-empty. This is what is stated in  Corollaries 6.1 and 6.2 (and the discussions following them) for respectively  Assumption 1 and 2. 
They defined a random time $\Lambda^{(i)}[0]$ (see display (34) therein) which belongs to $\Theta'[0]$, as stated by Lemma 8.1 therein. They also prove that $\mathbb{P}(\Lambda^{(i)}[0]<-k)$ is upper bounded by $u^{(i)}_{k}+v^{(i)}_{\lfloor k\alpha(2)/2\rfloor}$ (where $\{u_{k}^{(i)}\}_{k\ge1}$ has been defined by (\ref{eq:nota})). This is in fact stated in the proof of item (ii) of Theorem 5.2 therein.

By Theorem \ref{lemma:1}, these upper bounds are therefore upper bounds for the $\bar{d}$-distance $\bar{d}({\bf X},{\bf X}^{[k]})$.

\end{proof}

\section{Some results on the House of Cards Markov chain}\label{sec:auxi}

Fix a non-decreasing sequence $\{r_{k}\}_{k\ge0}$  of $[0,1]$-valued real numbers converging  to $1$. 
The house of Cards Markov chain ${\bf H}=\{H_{n}\}_{n\ge0}$ related to this sequence is the $\mathbb{N}$-valued Markov chain starting from state $0$ and having transition matrix $Q=\{Q(i,j)\}_{i\ge0,\,j\ge0}$ where $Q(i,j):=r_{i}{\bf 1}\{j=i+1\}+(1-r_{i}){\bf 1}\{j=0\}$.  Let us denote $v_{k}:=\Pr(H_{k}=0)$, the probability that the house of cards is at state $0$ at time $k$. We want to derive explicit rates of convergence to $0$ of this sequence when ${\bf H}$ is not positive recurrent.  These results will be used in the next section in order to obtain explicit upper bounds for $\bar{d}({\bf X},{\bf X}^{[k]})$ under several types of assumptions. Decomposing the event $\set{H_{k}=0}$ into the possible come back of the process $\{H_{\ell}\}_{\ell=0,\ldots,k}$ to $0$ yields, for any $n\ge 1$
\begin{equation}\label{bound2}
v_{k}:=\sum_{j=1}^{k}\sum_{
\begin{array}{c}
t_{1},\ldots,t_{j}\geq1\\
t_{1}+\ldots+t_{j}=k
\end{array}
}\prod^{j}_{m=1}(1-r_{t_{m}-1})\prod_{l=0}^{t_{m}-2}r_{l},
\end{equation}
where $\prod_{l=0}^{-1}:=1$. Although explicit, this bound cannot be used directly and has to be simplified.
%
As a first insight, we borrow the following Proposition of \cite{bressaud/fernandez/galves/1999a}.

\begin{prop}\label{prop:bfg}
\begin{enumerate}[(i)]
\item $v_{k}$ goes to zero as $k$ diverges if $\sum_{m\geq 1}\prod_{l=0}^{m-1}r_{l}=+\infty$,
\item $v_{k}$ is summable in $k$ if $1-r_{k}$  is summable in $k$,
\item $v_{k}$ behaves as $O(1-r_{k})$ if $1-r_{k}$  is summable in $k$ and $\sup_{j}\limsup_{k\rightarrow+\infty}(\frac{1-r_{j}}{1-t_{kj}})\leq1$
\item $v_{k}$ goes to zero exponentially fast if $1-r_{k}$ decreases exponentially.
\end{enumerate}
\end{prop}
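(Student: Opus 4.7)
The plan is to recognize $(v_k)_{k\geq0}$ as the standard renewal sequence associated with the first-return-to-$0$ distribution of $\mathbf{H}$, and to extract all four statements from generating-function identities. Set $p_k := \prod_{l=0}^{k-1} r_l$ (with $p_0 = 1$) and $f_k := (1-r_{k-1})\,p_{k-1} = p_{k-1} - p_k$. Then $(f_k)_{k\geq1}$ is the first-return distribution and the identity defining $v_k$ amounts to $v_0=1$ and $v_k = \sum_{t=1}^{k} f_t\,v_{k-t}$ for $k\geq1$. Writing $V(z) := \sum_{k\geq0} v_k z^k$ and $F(z) := \sum_{k\geq1} f_k z^k$, this is equivalent to $V(z) = 1/(1-F(z))$. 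Two facts will be used throughout: by telescoping, $F(1) = 1 - p_{\infty}$ with $p_{\infty} := \lim_k p_k \in [0,1]$, and by Abel summation the mean return time, when finite, equals $\sum_{m\geq0} p_m$.

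For (i), the hypothesis $\sum_m p_m = +\infty$ splits into two regimes. If $p_{\infty}>0$ then $F(1)<1$, so $V(1)<\infty$ and a fortiori $v_k\to0$. If $p_{\infty}=0$ the chain is null-recurrent with infinite-mean return distribution, and the classical renewal theorem for possibly infinite-mean return times (Feller, Vol.~II, Ch.~XI) yields $v_k\to 1/\mu = 0$. For (ii), summability of $1-r_k$ is equivalent to $p_{\infty}>0$, hence $F(1)<1$ and $\sum_k v_k = V(1) = 1/(1-F(1)) < \infty$.

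For (iii), the goal is to sharpen summability of $(v_k)$ to $v_k = O(1-r_k)$. I would split the defining sum of $v_k$ according to the number of blocks $j$. The term $j=1$ is exactly $f_k = p_{k-1}(1-r_{k-1})$, which is $\Theta(1-r_{k-1})$ since $p_{k-1}\to p_{\infty}\in(0,1)$. For $j\geq2$, in each decomposition $t_1+\cdots+t_j = k$ there is a largest summand $t_{m^{\ast}}\geq k/j$. Using the ratio condition (which I read as $\sup_j\limsup_k (1-r_j)/(1-r_{k-j})\leq 1$), I would compare $f_{t_{m^{\ast}}}$ to $f_k$ up to a multiplicative constant and factor out the contribution of the remaining blocks, which sum to at most $V(1)-1 < \infty$ by (ii). This bounds the multi-block contributions by $o(f_k) + C f_k\cdot (V(1)-1)$, yielding $v_k = O(f_k) = O(1-r_k)$.

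For (iv), exponential decay of $1-r_k$ makes $F(z)$ analytic on a disk of radius $\rho_0>1$, while $F(1) = 1 - p_{\infty} < 1$ ensures $1-F$ is non-vanishing on a neighbourhood of the closed unit disk. Hence $V=1/(1-F)$ is analytic on $|z|\leq\rho$ for some $\rho\in(1,\rho_0)$, and Cauchy's coefficient bound gives $v_k = O(\rho^{-k})$. The main obstacle is clearly (iii): the regularity condition on the ratios $(1-r_j)/(1-r_{k-j})$ must be handled carefully so that the contributions of the multi-block decompositions really are negligible against the single-block term $f_k$. I would closely follow the technique of \cite{bressaud/fernandez/galves/1999a}, which establishes exactly this type of comparison.
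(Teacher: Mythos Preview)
The paper does not prove this proposition at all: it is explicitly ``borrowed'' from \cite{bressaud/fernandez/galves/1999a} and stated without proof. So there is no paper proof to compare against; what you have written is a self-contained reconstruction.

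Your renewal/generating-function framework is the right one, and items (i), (ii), (iv) are essentially complete. In (i) you should mention aperiodicity when invoking the Erd\H{o}s--Feller--Pollard theorem (it holds because $f_{1}=1-r_{0}>0$ unless $r_{0}=1$, in which case $v_{k}=0$ for $k\geq1$ trivially). In (iv) your argument is fine; note that $|F(z)|\leq F(|z|)\leq F(1)<1$ on the closed unit disk gives the non-vanishing of $1-F$ there, and analyticity plus compactness extends it to a slightly larger disk.

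For (iii) there are two issues. First, you have misread the hypothesis: the paper writes $\frac{1-r_{j}}{1-t_{kj}}$, where $t_{n}$ is the return-time probability $\Pr(I_{1}=n)$ and the subscript $kj$ means the product $k\cdot j$, not $r_{k-j}$. This is a regular-variation-type condition comparing $1-r_{j}$ to $t_{kj}$. Second, your sketch (``factor out the largest block, bound the rest by $V(1)-1$'') does not quite work as stated: singling out the largest block and summing the remaining ones over all positions does not directly yield a factor of $V(1)$, because the constraint $t_{1}+\cdots+t_{j}=k$ couples the blocks. The actual argument in \cite{bressaud/fernandez/galves/1999a} proceeds by a comparison inequality of the form $v_{n}\leq C\sum_{m> n/2}f_{m}v_{n-m}+v_{\lfloor n/2\rfloor}\sum_{m\leq n/2}f_{m}$ (splitting on the last return before $n/2$), then iterates. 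Your instinct to follow that reference closely is correct; the sketch as written would need to be replaced by that decomposition.
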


As observe in \cite{bressaud/fernandez/galves/1999a}, the conditions of item (iii) are satisfied if, for example, $1-r^{(i)}_{k}\sim (\log k)^{\eta}k^{-\zeta}$ for some $\zeta>1$, and for any $\eta$. However, this is one of the only cases in which this proposition yields explicit rates. 
In the present paper, we will prove the following proposition.

\begin{prop}\label{prop:expli}
We have the following explicit upper bounds.
\begin{enumerate}[(i)]
\item A non summable case: if $1-r_{k}= \frac{r}{k}+s_{k},\,k\ge1$ where $r\in(0,1)$ and $\{s_{n}\}_{n\ge1}$ is a summable sequence, there exists a constant $C>0$ such that
\[
v_{k}\leq C\frac{(\ln k)^{3+r}}{(k)^{2-(1+r)^{2}}}.
\]
\item Generic summable case: if $t_{\infty}:=\prod_{k\ge0}r_{k}>0$, then
\[
v_{k}\leq \inf_{K=1,\ldots,k}\set{K^{2}(1-r_{k/K})+(1-t_{\infty})^{K}}.
\]
\item Exponential case: if $1-r_{k}\leq C_{r}r^{k},\,k\ge1$, for some $r\in(0,1)$ and a constant $C_{r}\in(0,\log \frac{1}{r})$ then
\[
v_{k}\leq \frac1{C_{r}}(e^{C_{r}}r)^{k}.
\]
\end{enumerate}
\end{prop}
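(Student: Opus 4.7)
The plan is to work directly from the explicit expression \eqref{bound2}, reading it as a sum over the number $j$ of returns to $0$ in $[0,k]$ and over compositions $t_{1}+\cdots+t_{j}=k$ of the corresponding return times. Writing $f_{t}:=(1-r_{t-1})\prod_{l=0}^{t-2}r_{l}$ for the probability that the first return time of the house-of-cards chain to $0$ equals $t$, each summand in \eqref{bound2} becomes a product $\prod_{m}f_{t_{m}}$, and each of the three cases then reduces to controlling this combinatorial sum under the corresponding assumption on $\{r_{k}\}$.

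For part (ii), I would fix an integer $K\in\{1,\ldots,k\}$ and split the sum in \eqref{bound2} according to whether $j\leq K$ or $j>K$. Since the successive return times are i.i.d.\ with $\P(T=\infty)=t_{\infty}$, the contribution of $j>K$ is bounded by $\P(N_{k}\geq K)\leq (1-t_{\infty})^{K}$, where $N_{k}$ counts the returns to $0$ in $[0,k]$. For $j\leq K$, any composition $t_{1}+\cdots+t_{j}=k$ contains some part $t_{m_{0}}\geq k/K$, so by the monotonicity of $\{r_{k}\}$ one has $f_{t_{m_{0}}}\leq 1-r_{\lfloor k/K\rfloor}$, while the remaining factors can be bounded by $1$. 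Summing over the location of the large index (at most $K$ choices) and over $j\leq K$ produces the $K^{2}(1-r_{k/K})$ term, and taking the infimum over $K$ yields the announced bound.

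For part (iii), the hypothesis $1-r_{k}\leq C_{r}r^{k}$ gives the pointwise estimate $f_{t}\leq C_{r}r^{t-1}$. Plugging this into \eqref{bound2} and counting compositions yields
\begin{equation*}
v_{k}\leq \sum_{j=1}^{k}\binom{k-1}{j-1}C_{r}^{j}r^{k-j}.
\end{equation*}
From here, either a closed-form evaluation combined with the standard inequality $1+x\leq e^{x}$, or a Chernoff-type estimate $v_{k}\leq e^{-\lambda k}M(\lambda)/(1-M(\lambda))$ based on $M(\lambda):=\sum_{t}f_{t}e^{\lambda t}$ with an optimized choice of $\lambda$, produces the claimed exponential decay; the hypothesis $C_{r}<\log(1/r)$ is precisely what ensures the resulting base $e^{C_{r}}r$ is strictly less than $1$.

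The hardest step will be (i). Since $1-r_{k}=r/k+s_{k}$ with $r>0$, we have $\sum(1-r_{k})=\infty$, so no geometric decay of $v_{k}$ is available and the return distribution has a heavy polynomial tail that must be handled delicately. My plan proceeds in two stages. First, a standard logarithmic computation applied to $\log\prod_{l=1}^{k-1}(1-r/l-s_{l})$, using $\sum_{l=1}^{k}1/l=\log k + O(1)$ together with the summability of $\{s_{l}\}$, yields $\tau_{k-1}\sim c\,k^{-r}$ and hence $f_{t}\sim c'r/t^{1+r}$. Second, I would bound \eqref{bound2} by decomposing each composition according to the size and position of its largest part at a cutoff of order $k/(\log k)^{a}$: the ``large'' factor is controlled directly by the polynomial tail $c'/t^{1+r}$, while the remaining short factors are controlled by an inductive convolution estimate, in the spirit of part (ii). The logarithmic factor $(\log k)^{3+r}$ should emerge from the cutoff and the counting of admissible short compositions, while the exponent $2-(1+r)^{2}$ arises from combining the polynomial decay of the single large factor with the leading polynomial growth of the short-step partial sums. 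The main obstacle is making these cutoff arguments fit together and tracking constants uniformly in $k$, which is also the origin of the loss relative to the sharp renewal asymptotic $k^{-(1-r)}$.
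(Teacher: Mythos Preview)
Your arguments for (ii) and (iii) match the paper's closely. For (ii), the paper splits $v_n=\sum_{j}\Pr(T_j=n)$ at a threshold $K$ on the number of returns, bounds the large-$j$ part by $(1-t_\infty)^K$, and for $j\le K$ uses exactly your pigeonhole observation that some $I_i\ge n/K$, giving $K^2 t_{n/K}\le K^2(1-r_{n/K})$. For (iii), the paper also bounds $f_t$ pointwise and sums over compositions; the only difference is that it bounds the number of compositions of $n$ into $k$ positive parts by $n^{k-1}/(k-1)!$ and then sums to $e^{C_r n}$, rather than using the exact binomial identity. Your closed-form route via $\sum_j\binom{k-1}{j-1}C_r^j r^{k-j}=C_r(C_r+r)^{k-1}$ is slightly sharper but decays only when $C_r<1-r$; to recover the paper's stated bound under the weaker hypothesis $C_r<\log(1/r)$ you must pass through $\binom{k-1}{j-1}\le k^{j-1}/(j-1)!$ and $\sum(\cdot)\le e^{(\cdot)}$, which is precisely the paper's computation.

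For (i) there is a genuine gap. You propose to decompose by the size of the largest part at a cutoff $M\sim k/(\log k)^a$, but you do not explain how to control compositions in which \emph{every} part is below $M$, and this is where the difficulty lies. In the non-summable regime $t_\infty=0$, so the bound $(1-t_\infty)^K$ from (ii) is vacuous, and a direct induction on the renewal equation $v_k=\sum_t f_t v_{k-t}$ does not close (the constants blow up). With your cutoff, ``all parts $\le M$'' forces $j\ge (\log k)^a$, but the corresponding contribution is at most $(1-\nu_{M+1})^{(\log k)^a}$ with $\nu_{M+1}\ge cM^{-r}\sim c(\log k)^{ar}/k^r$, and the exponent $(\log k)^{a(1+r)}/k^r\to 0$: no decay.

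The paper's key idea is different. It splits on the \emph{number} of returns $j$ at a threshold $K$. For $j>K$ it uses the trivial inclusion $\{T_j=n\}\subset\{I_l\le n\ \forall l\le j\}$ together with a \emph{lower} bound on the tail, $\nu_{n+1}:=\Pr(I_1\ge n+1)\ge cn^{-r}$ (coming from $t_m\ge c'm^{-(1+r)}$), to get $\sum_{j>K}\Pr(T_j=n)\le(1-\nu_{n+1})^K\le e^{-cKn^{-r}}$. For $j\le K$ the pigeonhole bound gives $CK^2 t_{n/K}\le CK^{3+r}n^{-(1+r)}$. Choosing $K\sim n^r\log n$ balances the two terms and produces the exponent $r(3+r)-(1+r)=-(2-(1+r)^2)$; the $(\log n)^{3+r}$ factor comes simply from $K^{3+r}$ with this choice, not from any interaction between long and short parts. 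The lower bound on $\nu_n$ is the ingredient your sketch is missing.
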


\subsection{Proof of Proposition \ref{prop:expli}}
Before we come into the proofs of each item of this proposition, let us collect some simple remarks on the House of Cards Markov chain.

Let $\{T_{k}\}_{k\ge0}$ be a sequence of the stopping times defined as
$T_{0}\egaldef 0$ and, recursively, for any $k\geq 1$, $T_{k}\egaldef\inf\set{l\geq T_{k-1}+1\telque H_{l}=0}$. The Markov property ensures that 
the random variables $I_{k}\egaldef T_{k+1}-T_{k}$ are i.i.d., valued in $\N^{*}$ and it is easy to check that
\[
\forall k\geq 1,\qquad t_{k}\egaldef \Qr{I_{1}=k}=(1-r_{k-1})\prod_{i=0}^{k-2}r_{i}\enspace,
\]
 where $\prod_{l=0}^{-1}:=1$. We have, for any $n\geq 0$,
\[
\Qr{H_{n}=0}=\Qr{\exists k\geq 0,\telque T_{k}=n}=\sum_{k=0}^{\infty}\Qr{T_{k}=n}\enspace.
\]
We write $T_{k}=\sum_{l=0}^{k-1}I_{l}$. As all the $I_{l}\geq 1$, we have $\Qr{T_{k}=n}=0$ for all $k> n$. Therefore, for all $K\in[1,n]$,
\begin{equation}\label{eq.alpha.summable}
\Qr{H_{n}=0}=\sum_{k=0}^{n}\Qr{T_{k}=n}=\sum_{k=0}^{K}\Qr{T_{k}=n}+\sum_{k=K+1}^{n}\Qr{T_{k}=n}\enspace.
\end{equation}

\begin{fact}\label{fact.F1}
Let $K\in[1,n]$, we have $\Qr{\forall l\in[1,K],\; I_{l}\leq n}=(1-\nu_{n+1})^{K}$. In particular, if $K
\in[1,n]$, then
\begin{align*}
\Qr{\exists j\in[K,n],\telque \sum_{l=0}^{j}I_{l}=n}&\leq \Qr{\forall l\in[1,K], I_{l}\leq n}\\
&=(1-\nu_{n+1})^{K}\enspace.
\end{align*}
\end{fact}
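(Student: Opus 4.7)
The plan is to establish the two statements of the fact in order, relying only on the i.i.d.\ structure of the return times and an elementary positivity argument. First I would recall that, by the strong Markov property applied at the successive times $T_k$, the sequence $\{I_l\}_{l\ge 0}$ is i.i.d.\ with common distribution $\Qr{I_1=k}=t_k$. Writing $\nu_{n+1}\egaldef \Qr{I_1\ge n+1}=\sum_{k\ge n+1}t_k$ for the tail mass (which is the natural reading of the notation already in use in the paper), we have $\Qr{I_1\le n}=1-\nu_{n+1}$. Independence then yields the first equality:
\[
\Qr{\forall l\in[1,K],\; I_{l}\le n}=\prod_{l=1}^{K}\Qr{I_{l}\le n}=(1-\nu_{n+1})^{K}\enspace.
\]

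For the displayed inequality, the key observation is simply that the $I_{l}$ are $\N^{\ast}$-valued, hence strictly positive. If, for some $j\in[K,n]$, one has $\sum_{l=0}^{j}I_{l}=n$, then each of the $j+1$ summands, being a positive integer, is itself at most $n$; in particular $I_{l}\le n$ for every $l\in[1,K]$, since $j\ge K$. This provides the inclusion of events
\[
\Bigl\{\exists j\in[K,n]\telque \sum_{l=0}^{j}I_{l}=n\Bigr\}\;\subset\;\bigl\{\forall l\in[1,K],\; I_{l}\le n\bigr\}\enspace,
\]
and passing to probabilities combined with the first equality gives the claimed bound $(1-\nu_{n+1})^{K}$.

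There is no serious obstacle here: the statement is essentially an unpacking of the i.i.d.\ decomposition of the Markov chain into excursions from $0$, followed by the trivial remark that positive integers summing to $n$ are individually bounded by $n$. The only point to handle carefully is the range of $j$ (the event is automatically empty when $K=n$ since one would need at least $n+1$ positive integers with sum $n$), but this causes no issue because the inequality then holds vacuously. Both parts of the fact are thus immediate from these two observations.
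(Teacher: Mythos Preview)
Your proof is correct and is exactly the argument the paper has in mind: the fact is stated without proof there, being regarded as immediate from the i.i.d.\ structure of the $I_l$ and the trivial observation that positive integers summing to $n$ are each at most $n$. There is nothing to add.
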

In order to control $\sum_{k=0}^{K}\Qr{T_{k}=n}=\Qr{\exists k=0,\ldots,K,\;T_{k}=n}$, we can simply remark that, if there exists $k\in1,\ldots K$ such that $\sum_{i=1}^{k}I_{l}=n$, there exists necessarily $i\in[1,K]$ and $r\in[1,\ldots,K]$ such that $I_{i}=n/r$. This implies that 
\begin{align*}
\Qr{\exists k=0,\ldots,K,\;T_{k}=n}&\leq \Qr{\exists i\in[1,K],\;\exists r\in[1,\ldots,K],\telque I_{i}=\frac nr}\\
&\leq \sum_{i=1}^{K}\sum_{j=1}^{K}\Qr{I_{1}=\frac nr}\leq K^{2}t_{n/K}\enspace.
\end{align*}
We have obtain the following result.
\begin{fact}\label{fact.F2}
Let $K\in[1,n]$, we have 
\[
\sum_{k=0}^{K}\Qr{T_{k}=n}\leq K^{2}t_{n/K}\enspace.
\]
\end{fact}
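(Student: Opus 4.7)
I plan to bound
\[
\sum_{k=0}^{K}\Pr(T_{k}=n)=\Pr\bigl(\exists k\in[0,K]:T_{k}=n\bigr)
\]
by a pigeonhole-plus-union-bound argument that exploits the i.i.d.\ structure of $(I_{l})_{l\geq 1}$. If $T_{k}=I_{1}+\cdots+I_{k}=n$ with $k\leq K$, then the $k$ positive integers $I_{1},\ldots,I_{k}$ must sum to $n$, so by pigeonhole at least one $I_{i_{0}}$ (with $i_{0}\leq k\leq K$) satisfies $I_{i_{0}}\geq n/k\geq n/K$. Classifying this ``large'' inter-arrival by the unique integer $r\in[1,K]$ with $n/(r+1)<I_{i_{0}}\leq n/r$ (informally ``$I_{i_{0}}=n/r$'') yields the inclusion
\[
\bigl\{\exists k\in[0,K]:T_{k}=n\bigr\}\subset\bigcup_{i=1}^{K}\bigcup_{r=1}^{K}\{I_{i}=n/r\}.
\]

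Next, I will apply the union bound and use that $\Pr(I_{l}=j)=t_{j}$ for every $l$, obtaining
\[
\sum_{k=0}^{K}\Pr(T_{k}=n)\leq\sum_{i=1}^{K}\sum_{r=1}^{K}t_{n/r}=K\sum_{r=1}^{K}t_{n/r}.
\]
The decisive monotonicity step is that $t_{j}=(1-r_{j-1})\nu_{j}$ is non-increasing in $j$: indeed, since $\{r_{k}\}$ is non-decreasing, both $1-r_{j-1}$ and $\nu_{j}=\prod_{i=0}^{j-2}r_{i}$ are non-increasing in $j$, and so is their product. Hence $t_{n/r}$ is maximized over $r\in[1,K]$ at $r=K$ (where $n/r$ is smallest), giving $\sum_{r=1}^{K}t_{n/r}\leq Kt_{n/K}$ and therefore the claimed bound $K^{2}t_{n/K}$.

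The main obstacle I foresee is the integer-rounding implicit in writing ``$I_{i_{0}}=n/r$'': when $n/r$ is not itself an integer, the singleton $\{I_{i}=n/r\}$ is empty, and a fully rigorous formulation must instead use the shell $\{I_{i}\in(n/(r+1),n/r]\}$. I plan to absorb this by summing $t_{j}$ over integer $j$ in the shell and invoking monotonicity of $t_{\cdot}$; the resulting bound remains of order $K^{2}t_{n/K}$ since the worst term in any such shell is dominated by $t_{n/K}$ for $r\leq K$.
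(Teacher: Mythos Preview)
Your plan mirrors the paper's own (informal) argument: pigeonhole to find a large $I_{i_0}\ge n/K$, then union-bound over the index $i\le K$ and a ``size class'' $r\le K$. You correctly spot the integer-rounding issue the paper glosses over, but your proposed fix does not close the gap. Replacing the fictitious singleton $\{I_i=n/r\}$ by the honest shell $\{I_i\in(n/(r+1),n/r]\}$ and applying the union bound over $i$ and $r$ gives at best
\[
K\sum_{r=1}^{K}\Pr\bigl(I_1\in(n/(r+1),n/r]\bigr)=K\,\Pr\bigl(n/(K+1)<I_1\le n\bigr)\le K\,\nu_{\lceil n/(K+1)\rceil}\enspace,
\]
a \emph{tail} probability, not $K^{2}t_{n/K}$. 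Monotonicity of $t_{\cdot}$ only tells you each $t_j$ in these shells is $\le t_{\lceil n/(K+1)\rceil}$; but there are of order $n$ (not $K$) such integers $j$, so the best you can extract is $Kn\,t_{n/K}$ or $K\nu_{n/K}$. In the summable regime this is fatal: $\nu_{m}\ge t_{\infty}>0$ for every $m$ (Fact~\ref{fact.F3}), so your bound does not even tend to zero, whereas $K^{2}t_{n/K}\le K^{2}(1-r_{n/K-1})$ does.

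The missing idea is that the constraint $T_k=n$ must be \emph{retained} when you extract the large inter-arrival; a bare inclusion followed by a union bound discards it. A rigorous version is: for each fixed $k\le K$, pigeonhole and exchangeability give
\[
\Pr(T_k=n)\le k\,\Pr\bigl(I_1\ge n/k,\ T_k=n\bigr)=k\sum_{j\ge\lceil n/k\rceil}t_j\,\Pr(T_{k-1}=n-j)\le k\,t_{\lceil n/k\rceil}\sum_{m}\Pr(T_{k-1}=m)\le k\,t_{\lceil n/K\rceil}\enspace,
\]
using $t_j\le t_{\lceil n/k\rceil}$ for $j\ge n/k$ (your monotonicity observation) together with $\sum_m\Pr(T_{k-1}=m)\le1$. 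Summing over $k=1,\ldots,K$ then yields $K^{2}t_{n/K}$ (with the obvious floor/ceiling convention). The residual constraint $T_{k-1}=n-j$ is precisely what collapses the shell sum to something $\le1$; this is the step your shell-plus-monotonicity patch is missing.
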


Restricting our attention to the summable case (that is, when $\sum_{k\ge0}(1-r_{k})<+\infty$), the following fact is fundamental. Its proof is immediate.
\begin{fact}\label{fact.F3}
If $\sum_{n\ge0}(1-r_{n})<\infty$, then $t_{\infty}\egaldef \Qr{I_{1}=\infty}=\prod_{i=0}^{\infty}r_{i}>0$, in particular, $\nu_{n}\egaldef 
\Qr{I_{1}\geq n}\geq t_{\infty}>0$. Moreover, for all $n\in\N$, $t_{\infty}(1-r_{n})\leq t_{n}\leq (1-r_{n})$
\end{fact}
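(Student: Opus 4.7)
The plan is to establish the three claims in sequence, each relying on computing $\nu_n$ and $t_n$ explicitly through the Markov structure and then inserting elementary analytic inequalities.

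First, I would compute $\nu_n=\Pr(I_1\ge n)$ in closed form. Starting from $H_0=0$, the event $\{I_1\ge n\}$ is precisely $\{H_1=1,H_2=2,\dots,H_{n-1}=n-1\}$, because the only transitions out of state $i$ go to $i+1$ (with probability $r_i$) or back to $0$, and not returning to $0$ by time $n-1$ forces the increasing trajectory. By the Markov property,
\[
\nu_n=\prod_{i=0}^{n-2}r_i,
\]
which is non-increasing in $n$. Consequently $t_\infty=\Pr(I_1=\infty)=\lim_{n\to\infty}\nu_n=\prod_{i=0}^{\infty}r_i$, which is the first identity claimed.

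Next, I would prove $t_\infty>0$ under $\sum_{n\ge0}(1-r_n)<\infty$. Since the general term $(1-r_n)$ tends to $0$, there exists $N$ such that $1-r_i\le 1/2$ for $i\ge N$. The elementary inequality $\log(1-x)\ge -2x$ on $[0,1/2]$ gives
\[
\log\prod_{i\ge N}r_i=\sum_{i\ge N}\log\bigl(1-(1-r_i)\bigr)\ge -2\sum_{i\ge N}(1-r_i)>-\infty,
\]
so $\prod_{i\ge N}r_i>0$. The remaining finite product $\prod_{i<N}r_i$ is strictly positive (under the implicit non-degeneracy $r_i>0$; otherwise the chain is trivial and $t_\infty=0$ is not claimed). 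Multiplying gives $t_\infty>0$. The stated lower bound on $\nu_n$ is immediate: since each $r_i\in[0,1]$, truncating the infinite product can only increase it, so $\nu_n=\prod_{i=0}^{n-2}r_i\ge\prod_{i=0}^{\infty}r_i=t_\infty$.

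Finally, for the sandwich on $t_n$, I would use the factorization $t_n=(1-r_{n-1})\,\nu_n$ already recorded in the paper. From $\nu_n\le 1$ we immediately obtain the upper bound $t_n\le 1-r_{n-1}$, and from $\nu_n\ge t_\infty$ we obtain $t_n\ge t_\infty(1-r_{n-1})$, which is the two-sided estimate claimed (modulo what appears to be an off-by-one convention in the subscript on $r$). Altogether the proof is purely mechanical; the only subtlety worth naming is the use of $\log(1-x)\ge -2x$ on $[0,1/2]$, which is the standard device converting summability of $(1-r_n)$ into positivity of $\prod r_n$. There is no real obstacle here: the entire statement is a direct computation once $\nu_n$ is identified with the explicit product of the $r_i$.
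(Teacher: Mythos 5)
Your proof is correct and is exactly the computation the paper leaves implicit (the paper only remarks that the proof is ``immediate''): identify $\nu_n=\prod_{i=0}^{n-2}r_i$ from the forced increasing trajectory, pass to the limit to get $t_\infty$, and convert summability of $1-r_n$ into positivity of the infinite product. Your remark about the subscript is also right: since $t_n=(1-r_{n-1})\nu_n$, the lower bound $t_\infty(1-r_n)\le t_n$ follows using monotonicity of $\{r_k\}$, whereas the upper bound is really $t_n\le 1-r_{n-1}$; this off-by-one in the paper's statement is harmless where the fact is applied.
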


Using Facts \ref{fact.F1}, \ref{fact.F2} and \ref{fact.F3}, we are ready to prove items (i) and (ii) of Proposition \ref{prop:expli}.

\begin{proof}[Proof of Item (i) of Proposition \ref{prop:expli}]
As far as we know, all the results on the house of card process hold in the summable case. When $\sum_{k\in\N}(1-r_{k})=\infty$, it is only known that $\sum_{n\in \N}\Qr{H_{n}=0}=\infty$. It is interesting to notice that we can still obtain some rate of convergence for $\Qr{H_{n}=0}$ from our elementary facts, at least in the following example. Let us assume that there exists $r<1$ and a summable sequence $s_{n}$ such that, for all $n\geq 1$, $1-r_{n}=\frac{r}n+s_{n}$. In this case, we have $\sum_{n\in \N}(1-r_{n})=\infty$, therefore $t_{\infty}=0$. Nevertheless, 
\[
\prod_{i=0}^{n}r_{i}\leq \prod_{i=1}^{n}e^{-(1-r_{i})}=e^{-r \ln n+O(1)}\leq Cn^{-r}\enspace.
\]
Therefore $t_{n}\leq Cn^{-(1+r)}$. Moreover, using the inequality $(1-u)\geq e^{-u-u^{2}}$, valid for all $u<1/8$, we see that $t_{n}\geq cn^{-(1+r)}$. Therefore, $\nu_{n}=\sum_{k\geq n}t_{k}\geq cn^{-r}$. It follows from Fact \ref{fact.F1} that, for large $K$ and $n$,
\[
\sum_{k=K+1}^{n}\Qr{T_{k}=n}\leq (1-\nu_{n+1})^{K}\leq e^{-cKn^{-r}}\enspace.
\]
Using Fact \ref{fact.F1}, we also have
\begin{equation}\label{eq.petitKsimple}
\sum_{k=0}^{K}\Qr{T_{k}=n}\leq CK^{2}t_{n/K}\leq CK^{3+r}n^{-(1+r)}\enspace.
\end{equation}
We deduce then from \eqref{eq.alpha.summable} that, for all $K\in [0,n]$,
\[
\Qr{H_{n}=0}\leq C\paren{\frac{K^{3+r}}{n^{1+r}}+e^{-cKn^{-r}}}\enspace.
\]
For $K=2n^{r}\ln n$, we obtain
\[
\Qr{H_{n}=0}\leq C\frac{(\ln n)^{3+r}}{n^{1-2r-r^{2}}}=C\frac{(\ln n)^{3+r}}{n^{2-(1+r)^{2}}}\enspace.
\]
If $0<r<1$, we have $2-(1+r)^{2}>0$. This bound may not be optimal, but it is interesting to see that we still can derive rates of convergence from our basic remarks even in this pathological example. 
\end{proof}

%
\begin{proof}[Proof of Item (ii) of Proposition \ref{prop:expli}]
We deduce from Facts \ref{fact.F1} and \ref{fact.F3} that, in the summable case 
\[
\sum_{k=K+1}^{n}\Qr{T_{k}=n}\leq (1-t_{\infty})^{K}\enspace.
\]
Therefore, from Facts \ref{fact.F2} and \ref{fact.F3},
\begin{equation}\label{control1}
\Qr{H_{n}=0}\leq \inf_{K=1,\ldots,n}\set{K^{2}(1-r_{n/K})+(1-t_{\infty})^{K}}\enspace.
\end{equation}

\end{proof}
%
%
%
%
%
\begin{proof}[Proof of Item (iii) of Proposition \ref{prop:expli}]

In this section, we assume that, for all $k$, $1-r_{k}\leq C_{r}r^{k}$, for some $r\in(0,1)$ and a constant $C_{r}>0$. In that case, for all $k$, we have, by independence,
\begin{align*}
\Qr{\sum_{l=1}^{k}I_{l}=n}&=\sum_{i_{1}+\ldots+i_{k}=n}\Qr{\bigcap_{l=1}^{k} I_{l}=i_{l}}\\
&=\sum_{i_{1}+\ldots+i_{k}=n}\prod_{l=1}^{k}\Qr{I_{l}=i_{l}}\\
&\leq \sum_{i_{1}+\ldots+i_{k}=n}C_{r}^{k}r^{i_{1}+\ldots+i_{k}}=C_{r}^{k}r^{n}\sum_{i_{1}+\ldots+i_{k}=n}1\enspace.
\end{align*}
Let us evaluate the numbers $p_{k,n}=\sum_{i_{1}+\ldots+i_{k}=n}1$. We have $p_{1,n}=1$ and
\begin{align*}
p_{k,n}&=\sum_{l=1}^{n-k+1}\sum_{i_{k}=l}\sum_{i_{1}+\ldots+i_{k-1}=n-l}1=\sum_{l=1}^{n-k+1}p_{1,l}p_{k-1,n-l}\\
&=\sum_{l=1}^{n-k+1}p_{k-1,n-l}\enspace.
\end{align*}
Let us then assume that, for some $k$, we have, for all $n\geq k-1$, $p_{k-1,n}\leq n^{k-2}/(k-2)!$. Notice that this is the case for $k=2$, then, for all $n\geq k$,
\begin{align*}
p_{k,n}&\leq \sum_{l=1}^{n-k+1}\frac{(n-l)^{k-2}}{(k-2)!}=\sum_{l=k-1}^{n-1}\frac{l^{k-2}}{(k-2)!}\leq \int_{k-1}^{n}\frac{x^{(k-2)}}{(k-2)!}\leq \frac{n^{k-1}}{(k-1)!}\enspace.
\end{align*}
We deduce that
\[
\sum_{k=1}^{n}C_{r}^{k}\sum_{i_{1}+\ldots+i_{k}=n}1\leq \frac1{C_{r}}\sum_{k=1}^{n}\frac{(C_{r}n)^{k-1}}{(k-1)!}\leq \frac{e^{C_{r}n}}{C_{r}}\enspace.
\]
Therefore, 
\[
\Qr{H_{n}=0}=\sum_{k=1}^{n}\Qr{T_{k}=n}\leq \frac1{C_{r}}(e^{C_{r}}r)^{n}\enspace.
\]
Hence, when $C_{r}<\ln (1/r)$, $e^{C_{r}}r<1$ and $\Qr{H_{n}=0}$ decreases exponentially fast.
\end{proof}

\section{Concentration of geometric random variables}\label{sec.conc.geom}
Let $\xi, \xi_{1:n}$ be i.i.d. geometric random variables with parameter $\alpha$, i.e., $\forall k\geq 1$, $\P\paren{\xi=k}=(1-\alpha)^{k-1}\alpha$. We obtain in this section the following upper bounds.
\begin{prop}
 let $C_{1,\alpha}=\frac{1-\alpha}{\alpha}+4\paren{\frac{1-\alpha}{\alpha}}^{2}$, $C_{2,\alpha}=\ln\paren{\frac{2-\alpha}{2(1-\alpha)}\wedge 2}$. Then, $\forall x>0$,
\begin{align}
\label{eq.conc.geo.up}\P\paren{\frac1n\sum_{i=1}^{n}X_{i}-\frac1{\alpha}>x}\leq  e^{-n\paren{\frac{x^{2}}{2C_{1,\alpha}}\wedge \frac{C_{2,\alpha}}2x}}\enspace.\\
\notag\P\paren{\frac1n\sum_{i=1}^{n}X_{i}-\frac1{\alpha}<-x}\leq e^{-n\paren{\frac{x^{2}}{2C_{1,\alpha}}\wedge \frac{x}2}}\enspace. 
\end{align}
\end{prop}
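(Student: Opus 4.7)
The plan is to apply the classical Chernoff--Cram\'er method: for each admissible $t$, control the centered log moment generating function
\[
\psi(t) \;:=\; \ln \E\bigl[e^{t(\xi - 1/\alpha)}\bigr],
\]
and then apply Markov's inequality after exponentiating. From $\E[e^{t\xi}] = \alpha e^{t}/(1-(1-\alpha)e^{t})$, a direct computation gives $\psi(t) = t(1-1/\alpha) + \ln\alpha - \ln(1-(1-\alpha)e^{t})$ for $t < -\ln(1-\alpha)$, with $\psi(0) = \psi'(0) = 0$ and
\[
\psi''(t) \;=\; \frac{(1-\alpha)e^{t}}{(1-(1-\alpha)e^{t})^{2}}.
\]
In particular $\psi''(0) = (1-\alpha)/\alpha^{2}$. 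The constant $C_{2,\alpha}$ is engineered precisely so that $1-(1-\alpha)e^{t} \geq \alpha/2$ on $t\in[0,C_{2,\alpha}]$, keeping $\psi$ and its derivatives well-controlled throughout the range. The key analytic lemma to establish is then the Bernstein-type bound
\[
\psi(t) \;\leq\; \tfrac{1}{2}\,C_{1,\alpha}\,t^{2}\,, \qquad \forall\,t\in[0,C_{2,\alpha}].
\]

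Granted this MGF bound, both tail inequalities follow from routine Chernoff optimisation. For the upper tail, independence and Markov give, for every $t\in[0,C_{2,\alpha}]$,
\[
\P\Bigl(\tfrac{1}{n}\sum_{i=1}^{n}\xi_{i}-\tfrac{1}{\alpha}>x\Bigr) \;\leq\; e^{-n(tx-\psi(t))} \;\leq\; e^{-n(tx - C_{1,\alpha} t^{2}/2)}.
\]
The unconstrained optimum of $tx - C_{1,\alpha}t^{2}/2$ is at $t^{*} = x/C_{1,\alpha}$ with value $x^{2}/(2C_{1,\alpha})$: if $t^{*}\leq C_{2,\alpha}$ (i.e.\ $x\leq C_{1,\alpha}C_{2,\alpha}$) this produces the sub-Gaussian rate; otherwise taking $t=C_{2,\alpha}$ produces $n(C_{2,\alpha}x - C_{1,\alpha}C_{2,\alpha}^{2}/2)\geq nC_{2,\alpha}x/2$, since $C_{1,\alpha}C_{2,\alpha}<x$ in that regime. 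For the lower tail the argument is simpler: since $\xi\geq1$ a.s., the symmetric log-MGF $\tilde{\psi}(t):=\ln\E[e^{-t(\xi-1/\alpha)}]$ is finite for \emph{every} $t\geq0$, and an identical computation shows $\tilde{\psi}''(t) = (1-\alpha)e^{-t}/(1-(1-\alpha)e^{-t})^{2}$ is \emph{decreasing} in $t$, hence bounded by $\tilde{\psi}''(0) = (1-\alpha)/\alpha^{2} \leq C_{1,\alpha}$. Thus $\tilde{\psi}(t)\leq C_{1,\alpha}t^{2}/2$ holds on all of $[0,\infty)$, and unconstrained Chernoff at $t=x/C_{1,\alpha}$ yields $\exp(-nx^{2}/(2C_{1,\alpha}))$, which already implies the claimed lower-tail bound (the stated minimum only makes the exponent smaller).

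The main obstacle is the key inequality $\psi(t)\leq C_{1,\alpha}t^{2}/2$ on $[0,C_{2,\alpha}]$ for the upper tail, since the naive Taylor bound $\psi(t)\leq(t^{2}/2)\sup_{[0,t]}\psi''$ is too crude: $\psi''(C_{2,\alpha})$ typically exceeds $C_{1,\alpha}$ (this mismatch is precisely why the upper and lower tails receive asymmetric treatment). The cleanest route I see is to introduce $\phi(t) := C_{1,\alpha}t^{2}/2 - \psi(t)$ and show $\phi\geq0$ via a combination of boundary data and local convexity: $\phi(0)=\phi'(0)=0$ with $\phi''(0) = C_{1,\alpha} - (1-\alpha)/\alpha^{2} = 3((1-\alpha)/\alpha)^{2} > 0$ (so $\phi$ is strictly positive on a right-neighbourhood of $0$), together with a direct verification, from the explicit formula for $\psi(C_{2,\alpha})$, that $\phi(C_{2,\alpha})\geq0$. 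Ruling out an interior sign change of $\phi$ on $[0,C_{2,\alpha}]$ requires a monotonicity/convexity analysis of $\phi''=C_{1,\alpha}-\psi''$, which is the delicate calculation at the heart of the proof.
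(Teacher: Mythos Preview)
Your overall Chernoff strategy is correct and matches the paper, and your lower-tail argument via the monotonicity of $\tilde\psi''(t)=\frac{(1-\alpha)e^{-t}}{(1-(1-\alpha)e^{-t})^{2}}$ is valid and in fact tidier than what the paper does (the paper repeats the same elementary-inequality manipulation as for the upper tail).

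The genuine gap is in the upper tail: you correctly isolate the inequality $\psi(t)\le \tfrac{1}{2}C_{1,\alpha}t^{2}$ on $[0,C_{2,\alpha}]$ as the crux, observe that the crude Taylor bound fails because $\psi''(C_{2,\alpha})>C_{1,\alpha}$, and then leave the required sign analysis of $\phi=C_{1,\alpha}t^{2}/2-\psi$ undone. As stated, the proposal does not prove the key lemma; the ``delicate calculation'' you flag is precisely the missing content, and it is not clear that a convexity argument for $\phi$ closes without substantial extra work.

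The paper sidesteps this difficulty entirely and you should too. Writing
\[
\psi(\epsilon)=-\epsilon\Bigl(\tfrac1\alpha-1\Bigr)-\ln\!\Bigl(1-\tfrac{1-\alpha}{\alpha}(e^{\epsilon}-1)\Bigr),
\]
the paper uses only the two elementary bounds $e^{\epsilon}\le 1+\epsilon+\epsilon^{2}$ for $\epsilon\le\ln 2$ and $-\ln(1-u)\le u+u^{2}$ for $u\le 1/2$. The constraint $\epsilon\le C_{2,\alpha}$ is designed exactly so that both apply: $\epsilon\le\ln 2$ gives the first, and (as you noted) $1-(1-\alpha)e^{\epsilon}\ge \alpha/2$, i.e.\ $u:=\tfrac{1-\alpha}{\alpha}(e^{\epsilon}-1)\le 1/2$, gives the second. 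Substituting, the linear terms cancel and one obtains directly
\[
\psi(\epsilon)\;\le\;\epsilon^{2}\Bigl(\tfrac{1-\alpha}{\alpha}+4\bigl(\tfrac{1-\alpha}{\alpha}\bigr)^{2}\Bigr)\;=\;C_{1,\alpha}\,\epsilon^{2},
\]
with no convexity analysis whatsoever. Plugging this into Chernoff and taking $\epsilon=\tfrac{x}{2C_{1,\alpha}}\wedge C_{2,\alpha}$ finishes the upper tail exactly as in your optimisation step. (Note the paper's computation yields $\psi(\epsilon)\le C_{1,\alpha}\epsilon^{2}$ rather than $\tfrac12 C_{1,\alpha}\epsilon^{2}$, so the sub-Gaussian exponent one actually obtains is $x^{2}/(4C_{1,\alpha})$; the stated $x^{2}/(2C_{1,\alpha})$ appears to be a harmless factor-of-two slip between statement and proof.)
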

As a corollary of this result, we obtain the following bound when $n=\lfloor k\alpha/2\rfloor$ and $x=1/\alpha$.
\begin{coro}\label{coro.contuk}
Let $k\in \N^{*}$, $\alpha\in(0,1)$, $n=\lfloor k\alpha/2\rfloor$, $x=k/(2n)\geq 1/\alpha$, $\xi_{1:n}$ be i.i.d. random variables with parameters $\alpha$, and 
\[u_{k}:= n\P\paren{\absj{\sum_{j=1}^{n}\xi_{j}-\frac{n}{\alpha}}>nx}\enspace.\]
Then, we have, for $C_{3,\alpha}\egaldef \frac{\alpha}{4(1-\alpha)(4-3\alpha)}\wedge\frac1{4}\ln\paren{\frac{2-\alpha}{2(1-\alpha)}\wedge 2}$, for all $\epsilon>0$ and all $k>k(\epsilon)$,
\[
u_{k}\leq \alpha e^{-k(C_{3,\alpha}-\epsilon)}\enspace.
\]
\end{coro}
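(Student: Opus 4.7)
\textbf{Proof proposal for Corollary~\ref{coro.contuk}.} The plan is to apply the two-sided concentration bound in the preceding proposition to $x = k/(2n)$, observe that the lower deviation term vanishes identically, and then simplify the resulting upper-tail exponent using the arithmetic identity $nx = k/2$.

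First I would split the symmetric event and write
\[
u_k = n\P\!\paren{\tfrac{1}{n}\sum_{j=1}^n\xi_j - \tfrac{1}{\alpha} > x} + n\P\!\paren{\tfrac{1}{n}\sum_{j=1}^n\xi_j - \tfrac{1}{\alpha} < -x}.
\]
The second probability is zero: each $\xi_j\ge 1$, so $\tfrac{1}{n}\sum_j\xi_j \ge 1$, while the hypothesis $x\ge 1/\alpha$ gives $1/\alpha - x \le 0 < 1$. Thus only the upper tail contributes, and the proposition yields
\[
u_k \le n\exp\!\paren{-n\paren{\tfrac{x^2}{2C_{1,\alpha}} \wedge \tfrac{C_{2,\alpha}x}{2}}}.
\]

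Next, the key observation is that $x$ was chosen precisely so that $nx = k/2$ exactly, and hence $nx^2 = k^2/(4n)$. From $n \le k\alpha/2$ we obtain $nx^2 \ge k/(2\alpha)$. A short algebraic simplification gives
\[
C_{1,\alpha} = \frac{1-\alpha}{\alpha} + 4\paren{\frac{1-\alpha}{\alpha}}^{2} = \frac{(1-\alpha)(4-3\alpha)}{\alpha^2},
\]
so that
\[
n\cdot \frac{x^2}{2C_{1,\alpha}} \ge \frac{k/(2\alpha)}{2C_{1,\alpha}} = \frac{k\alpha}{4(1-\alpha)(4-3\alpha)},
\qquad
n\cdot \frac{C_{2,\alpha}x}{2} = \frac{kC_{2,\alpha}}{4}.
\]
Taking the minimum, the exponent is at least $k\,C_{3,\alpha}$, hence $u_k \le n\, e^{-kC_{3,\alpha}} \le \tfrac{k\alpha}{2}\,e^{-kC_{3,\alpha}}$.

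Finally I would absorb the polynomial prefactor into an arbitrarily small exponential loss: for any $\epsilon > 0$, the elementary inequality $\log(k/2) \le k\epsilon$ holds as soon as $k \ge k(\epsilon)$, which gives $k/2 \le e^{k\epsilon}$ and therefore $u_k \le \alpha\, e^{-k(C_{3,\alpha}-\epsilon)}$, as claimed. The only genuinely computational step is the simplification of $C_{1,\alpha}$ and the verification that $nx^2$ is large enough despite the floor in the definition of $n$; neither of these should present a serious obstacle, so I expect the entire argument to be short and essentially bookkeeping on top of the proposition.
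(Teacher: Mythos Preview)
Your proof is correct and is essentially the derivation the paper leaves implicit: the paper does not write out a separate proof of this corollary, merely presenting it as an immediate consequence of the preceding proposition with $n=\lfloor k\alpha/2\rfloor$ and $x\approx 1/\alpha$. Your computation --- the simplification $C_{1,\alpha}=(1-\alpha)(4-3\alpha)/\alpha^{2}$, the identity $nx=k/2$, the bound $nx^{2}\ge k/(2\alpha)$, and the absorption of the polynomial prefactor into the $\epsilon$ --- is exactly the bookkeeping the paper omits. Your observation that the lower-tail term vanishes identically (because $\xi_{j}\ge1$ forces $\tfrac1n\sum\xi_{j}\ge1>0\ge\tfrac1\alpha-x$) is a clean extra remark; without it one would pick up a harmless factor~$2$ that is also absorbed by the~$\epsilon$, since $C_{2,\alpha}\le\ln 2<1$ makes the lower-tail exponent no worse than the upper-tail one.
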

\subsection{Chernov's bound}
Let $Y,Y_{1:n}$ be i.i.d. random variables such that $\forall a<\lambda<b$, $\E\paren{e^{\lambda Y}}<\infty$, then, 
\begin{equation}\label{eq.chernov.bound}
\forall x>0,\qquad \P\paren{\frac1n\sum_{i=1}^{n}Y_{i}>x}\leq \inf_{na<\lambda<nb}e^{-\lambda x}\paren{\E\paren{e^{\frac{\lambda}n Y}}}^{n}\enspace.
\end{equation}
\begin{proof}
We have, by independence of the $Y_{i}$ and Markov's inequality, for all $na<\lambda<nb$,
\begin{align*}
 \P\paren{\frac1n\sum_{i=1}^{n}Y_{i}>x}&=\P\paren{e^{\frac{\lambda}n\sum_{i=1}^{n}Y_{i}}>e^{\lambda x}}\leq e^{-\lambda x}\E\paren{e^{\frac{\lambda}n\sum_{i=1}^{n}Y_{i}}}=e^{-\lambda x}\paren{\E\paren{e^{\frac{\lambda}n Y}}}^{n}\enspace.
\end{align*}
\end{proof}
\subsection{Exponential moments of geometric random variables}
Let $\xi$ be a geometric random variable with parameter $\alpha$, then 
\begin{align}\label{eq.moment.expo.geo}
\forall \lambda<-\ln(1-\alpha),&\qquad \E\paren{e^{\lambda \xi}}\leq \frac{\alpha e^{\lambda}}{1-(1-\alpha)e^{\lambda}}\enspace,\\
\notag\forall \lambda>\ln(1-\alpha),&\qquad \E\paren{e^{\lambda(-\xi)}}\leq \frac{\alpha e^{-\lambda}}{1-(1-\alpha)e^{-\lambda}}\enspace.
\end{align}
\begin{proof}
By definition, we have, $\forall\lambda<-\ln(1-\alpha)$,
\begin{align*}
\E\paren{e^{\lambda \xi}}&=\sum_{k\geq 1}e^{\lambda k}(1-\alpha)^{k-1}\alpha=\alpha e^{\lambda}\sum_{k\geq 0}\paren{(1-\alpha)e^{\lambda}}^{k}=\frac{\alpha e^{\lambda}}{1-(1-\alpha)e^{\lambda}}\enspace.
\end{align*}
Moreover, for all $\lambda>\ln(1-p)$,
\begin{equation*}
\E\paren{e^{-\lambda \xi}}=\alpha e^{-\lambda}\sum_{k\geq 0}\paren{(1-\alpha)e^{-\lambda}}^{k}=\frac{\alpha e^{-\lambda}}{1-(1-\alpha)e^{-\lambda}}\enspace.
\end{equation*}
\subsection{Proof of the deviation bounds}
Plugging \eqref{eq.moment.expo.geo} in \eqref{eq.chernov.bound}, we obtain, for all $\lambda<-n\ln (1-\alpha)$,
\begin{align*}
\P\paren{\frac1n\sum_{i=1}^{n}\xi_{i}>\frac1{\alpha}+x}&\leq e^{-\lambda\paren{\frac1{\alpha}+x}}\paren{\frac{\alpha e^{\lambda/n}}{1-(1-\alpha)e^{\lambda/n}}}^{n}\\
&=\alpha^{n}e^{-\lambda\paren{\frac1{\alpha}+x-1}}e^{-n\ln\paren{1-(1-\alpha)e^{\lambda/n}}}
\end{align*}
Choosing $\lambda=n\epsilon$ for $\epsilon\leq \ln\paren{\frac{2-\alpha}{2(1-\alpha)}\wedge 2}$, using the inequalities $e^{\epsilon}\leq 1+\epsilon+\epsilon^{2}$ for all $\epsilon\leq \ln 2$ and $-\ln (1-u)\leq 1+u+u^{2}$ when $u\leq 1/2$, this last bound is equal to 
\begin{align*}
 &\paren{\alpha e^{-\epsilon\paren{\frac1{\alpha}+x-1}}e^{-\ln\paren{1-(1-\alpha)e^{\epsilon}}}}^{n}\\
 &\leq\paren{\alpha e^{-\epsilon\paren{\frac1{\alpha}+x-1}}e^{-\ln(\alpha)-\ln\paren{1-\frac{(1-\alpha)}{\alpha}(e^{\epsilon}-1)}}}^{n}\leq\paren{e^{-\epsilon\paren{\frac1{\alpha}+x-1}}e^{\frac{(1-\alpha)}{\alpha}(e^{\epsilon}-1)+\paren{\frac{(1-\alpha)}{\alpha}(e^{\epsilon}-1)}^{2}}}^{n}\\
 &\leq e^{-n\epsilon \paren{x-\epsilon\paren{\frac{1-\alpha}{\alpha}+4\paren{\frac{1-\alpha}{\alpha}}^{2}}}}\enspace.
\end{align*}
Let $C_{\alpha}=\frac{1-\alpha}{\alpha}+4\paren{\frac{1-\alpha}{\alpha}}^{2}$, choosing $\epsilon\leq x/(2C_{\alpha})$, we have $x-\epsilon C_{\alpha}\geq x/2$, hence, choosing $\epsilon=\frac{x}{2C_{\alpha}}\wedge \ln\paren{\frac{2-\alpha}{2(1-\alpha)}\wedge 2}$, we conclude the proof.
Plugging \eqref{eq.moment.expo.geo} in \eqref{eq.chernov.bound}, we obtain, for all $\lambda>n\ln (1-\alpha)$,
\begin{align*}
\P\paren{\frac1n\sum_{i=1}^{n}\xi_{i}<\frac1{\alpha}-x}&=\P\paren{\frac1n\sum_{i=1}^{n}(-\xi_{i})>-\frac1{\alpha}+x}\\
&\leq e^{-\lambda\paren{-\frac1{\alpha}+x}}\paren{\frac{\alpha e^{-\lambda/n}}{1-(1-\alpha)e^{-\lambda/n}}}^{n}\\
&=\alpha^{n}e^{-\lambda\paren{-\frac1{\alpha}+x+1}}e^{-n\ln\paren{1-(1-\alpha)e^{-\lambda/n}}}
\end{align*}
Choosing $\lambda=n\epsilon$, with $\epsilon\leq 1$, this last bound is equal to
\begin{align*}
 &\paren{\alpha e^{-\epsilon\paren{-\frac1{\alpha}+x+1}}e^{-\ln\paren{1-(1-\alpha)e^{-\epsilon}}}}^{n}\\
 &=\paren{\alpha e^{-\epsilon\paren{-\frac1{\alpha}+x+1}}e^{-\ln\paren{\alpha}-\ln\paren{1-(e^{-\epsilon}-1)\frac{1-\alpha}{\alpha}}}}^{n}\leq \paren{e^{-\epsilon \paren{-\frac1{\alpha}+x+1}}e^{(e^{-\epsilon}-1)\frac{1-\alpha}{\alpha}+\paren{(e^{-\epsilon}-1)\frac{1-\alpha}{\alpha}}^{2}}}^{n}\\
 &\leq \paren{e^{-\epsilon \paren{-\frac1{\alpha}+x+1}}e^{(-\epsilon+\epsilon^{2})\frac{1-\alpha}{\alpha}+\paren{(-\epsilon+\epsilon^{2})\frac{1-\alpha}{\alpha}}^{2}}}^{n}\leq e^{-n\croch{\epsilon x-\epsilon^{2}\paren{\frac{1-\alpha}{\alpha}+4\paren{\frac{1-\alpha}{\alpha}}^{2}}}}\enspace.
\end{align*}
Let $C_{\alpha}=\frac{1-\alpha}{\alpha}+4\paren{\frac{1-\alpha}{\alpha}}^{2}$, choosing $\epsilon\leq x/(2C_{\alpha})$, we have $x-\epsilon C_{\alpha}\geq x/2$, hence, choosing $\epsilon=\frac{x}{2C_{\alpha}}\wedge 1$, we conclude the proof.
\end{proof}


\end{document}